\newtheorem{ill}{Illustration}
\newtheoremstyle{theorem}
  {10pt}          
  {10pt}  
  {\sl}  
  {\parindent}     
  {\bf}  
  {. }    
  { }    
  {}     
\theoremstyle{theorem}
\newtheorem{theorem}{Theorem}
\newtheorem{corollary}[theorem]{Corollary}
\newtheorem{problem}{Problem}
\newtheoremstyle{defi}
  {8pt}          
  {8pt}  
  {\rm}  
  {\parindent}     
  {\bf}  
  {. }    
  { }    
  {}     
\theoremstyle{defi}
\date{}
\begin{document}
\title{Steiner Wiener index of block graphs}
\maketitle
\vspace{-20mm}

\begin{center}
\author{Matja\v{z} Kov\v{s}e,   
  \and
    Rasila V A,
    \and
    	Ambat Vijayakumar
 }
	\end{center}
\newcommand{\Addresses}{
  \bigskip
  \footnotesize

  Matja\v{z} Kov\v{s}e, School of Basic Sciences, 
IIT Bhubaneswar,
India, \texttt{matjaz.kovse@gmail.com}
\par\nopagebreak
  Rasila V A,
  Department of Mathematics, Cochin University of Science and Technology, India, \texttt{17rasila17@gmail.com} \par\nopagebreak  
  Ambat Vijayakumar,
	 Department of Mathematics, Cochin University of Science and Technology, India, \texttt{vambat@gmail.com}}

\begin{abstract}
Let $S$ be a set of vertices of a connected graph $G$. The Steiner distance of $S$ is the minimum size of a connected subgraph of $G$ containing all the vertices of $S$. The Steiner $k$-Wiener index is the sum of all Steiner distances on sets of $k$ vertices of $G$. Different simple methods for calculating the Steiner $k$-Wiener index of block graphs are presented. 
\end{abstract}
\noindent
{\bf MR Subject Classifications:} 05C12
\bigskip\noindent

\noindent
{\bf Keywords}: Distance in graphs; Steiner distance; Wiener index; Steiner $k$-Wiener index; block graphs; poset of block graphs.

\section{Introduction}
All graphs in this paper are simple, finite and undirected. Unless stated otherwise let $n=|V(G)|$ and $m=|E(G)|$, hence $n$ denotes the order and $m$ size of a graph $G$. 
If $G$ is a connected graph
and $u, v \in V (G)$, then the (geodetic) distance $d_G(u, v)$ (or simply $d(u, v)$ if there is no confusion about $G$) between $u$ and $v$ is the number of edges on a shortest path connecting $u$ and $v$. 
The Wiener index $W (G)$ of a connected graph $G$ is defined as
$$W (G) =\sum_ {\{u,v \} \in V(G)} d(u,v).$$
The first investigation of this distance-based graph invariant was done by Wiener in 1947, who realized in \cite{wiener1947} that there exist correlations between the
boiling points of paraffins and their molecular structure and noted that in the case of a tree it can be easily calculated
from the edge contributions by the following formula:
\begin{equation}\label{eq:Wiener}
 W(T)= \sum_{e \in E(T)}n(T_{1})n(T_{2}),
\end{equation}
where $n(T_{1} )$ and $n(T_{2} )$ denote the number of vertices in connected components $T_{1}$ and $T_{2}$ formed by removing an edge $e$ from the tree $T$ .

The Steiner distance of a graph has been introduced  in \cite{chartrand1989} by Chartrand et al., as a natural generalization of the geodetic graph
distance. For a connected graph $G$ and $S\subseteq
V(G)$, the Steiner distance $d_G(S)$ (or simply $d(S)$) among the vertices of
$S$ is the minimum size among all connected subgraphs whose vertex
sets contain $S$. Note that any such subgraph $H$ is a tree, called a Steiner tree connecting vertices from $S$. Vertices of $S$ are called terminal vertices of tree $H$, while the rest of the vertices of $H$ are called inner vertices of the Steiner tree $H$. If $S = \{u, v\}$, then $d(S) = d(u, v)$ coincides with the geodetic
distance between $u$ and $v$. In \cite{dankelmann1996average} Dankelmann et al. followed by studying
the average $k$-Steiner distance $\mu_k (G)$. In \cite{gutman2016discuss}, Li et al. introduced a generalization of the Wiener index by using the Steiner distance. The Steiner $k$-Wiener index $SW_{k}(G)$ of a connected
graph $G$ is defined as $$SW_k(G) =  \sum_ {\substack{S\subseteq V(G)\\ | S| =k}} d(S).$$
For $k = 2$, the Steiner $k$-Wiener index coincides with the Wiener index. The average $k$-Steiner distance $\mu_k (G)$ is related to the Steiner $k$-Wiener index via the equality $\mu_k (G) = SW_{k}(G)/ {n \choose k}$.
In \cite{gutman2016discuss} the exact values of the Steiner $k$-Wiener index of
the path, star, complete graph, and complete bipartite graph and sharp lower and upper bounds for $SW_{k}(G)$ for connected graphs and for trees have been obtained. In \cite{gutman2015multicenter} an application of Steiner $k$-Wiener index in mathematical chemistry is reported, and it is shown that the term $W(G)+ \lambda SW_k(G)$ provides a better  approximation for the boiling points of alkanes than $W(G)$ itself, and that the best such approximation is obtained for $k=7$. 
For a survey on Steiner distance see \cite{mao2017survey}.

The problem of deciding whether for a given subset of vertices in a graph $G$ there exist a Steiner tree of size at most $t$ belongs to the classical NP-complete problems from \cite{garey1979NP}. Hence for $k>2$ it is not very likely to find an efficient way to compute the Steiner $k$-Wiener index for general graphs. Therefore it becomes interesting to either find efficient procedures to compute the Steiner $k$-Wiener index or bounds for particular classes of graphs.

A vertex $v$ is a cut vertex of graph $G$ if deleting $v$ and all edges incident to it increases
the number of connected components $G$. A block of a graph is a maximal connected vertex induced
subgraph that has no cut vertices. A block graph is a graph in which every block is a clique. Block graphs are a natural generalization of trees, and they arise in areas such as metric graph theory, \cite{bandelt1986block}, 
molecular graphs \cite{behtoei2010block} 
and phylogenetics \cite{dress2016block}. They have been characterized in various ways, for example, as certain intersection graphs \cite{harary1963block}, or in terms of distance conditions \cite{behtoei2010block}. In \cite{bapat2011block} it has been shown that  the determinant of the distance matrix of a block graph depends only on types of blocks, and not how they are connected. Steiner distance, Steiner centers and Steiner medians of block graphs have been studied in \cite{yeh2008centers}.

A vertex of a block graph $G$ that appears in only one block is called a pendant vertex. Hence there are exacly two types of vertices in a block graph: cut vertices and pendant vertices. A block $B$ is called a pendant block if $B$ has a non-empty intersection with a unique block in $G$. Any block graph different from a complete graph has at least two pendant blocks. Pendant vertex from a pendant block of $G$ is called a leaf of $G$. 
For a block graph $G$ with blocks $B_1,B_2, \ldots, B_t$, let $b_i=|V(B_i)|$, for $i\in \{1, \ldots, t\}$. We call a sequence $b_1 \geq b_2 \geq  \ldots \geq b_t$ the block order sequence of $G$.

A line graph $L(G)$ of a simple graph $G$ is obtained by associating a vertex with each edge of the graph and connecting two vertices with an edge if and only if the corresponding edges of $G$ have a vertex in common. A graph is a line graph of a tree if it is a connected block graph in which each cut vertex is in exactly two blocks, hence they are claw-free block graphs: no induced subgraph is a claw - a complete bipartite graph $K_{1,3}.$ A caterpillar is a tree with the property that a path remains
if all leaves are deleted. This path is called the backbone of the caterpillar.
 Line graphs of caterpillars are called path-like block graphs. These are all block graphs with precisely two pendant blocks. 
The windmill graph $Wd(r,t)$ is a block graph constructed by joining $t$ copies of $K_r$ at a shared vertex, where $r, t \geq 2$. If $v \in V(G)$ is adjacent to all other vertices of $G$, it is called a universal vertex. A block graph with a universal vertex is called star-like block graph.

For $v\in V(G)$ let $N(v)$ denote the set of all neighbours. The degree of vertex $v\in V(G)$ in graph $G$ is defined as the number of neighbours and denoted with $deg(v)$, i.e., $deg(v)=|N(v)|$. Graph $G$ has degree sequence $\lambda = (d_1, \ldots, d_n)$, $d_1 \geq d_2 \geq \ldots \geq d_k$,  if vertices of $G$ can be indexed
from $v_1$ to $v_n$ such that $deg(v_i)=d_i$, $i=1,\ldots, n$. Let
$\mathcal{T}(\lambda)$ be the set of trees with degree sequence $\lambda$.
It is known that $\mathcal{T}(\lambda)$ is not empty, if and only if $d_1+\ldots+d_n=2(n-1)$.

The Cartesian product $G \,\square\, H$ of two graphs $G$ and $H$ is the graph with vertex set $V(G)\times V(H)$ and
$(a,x)(b,y)\in E(G\,\square\, H)$ whenever either $ab\in E(G)$ and $x=y$, or $a=b$ and $xy\in E(H)$.
Cartesian products of complete graphs are called Hamming graphs. They can be alternatively described
as follows. For $i=1,2, \ldots, t$ let $r_i\geq 2$ be given integers. Let $G$ be the graph whose vertices are the $t$-tuples $a_1,a_2,\ldots a_t$ with $a_i\in \{0,1, \ldots, r_i-1\}$. 
Two vertices being
adjacent if the corresponding tuples differ in precisely 
one place. Then it is straightforward to see that $G$ is
isomorphic to $K_{r_1}\Box K_{r_2}\Box \cdots \Box K_{r_t}$. A subgraph $H$ of $G$ is called isometric (or distance preserving) if  $d_{H}(u,v)=d_{G}(u,v)$ for all $u,v\in V(H)$. Isometric subgraphs of Hamming graphs are called partial Hamming graphs. 
The Hamming distance between two $t$-tuples is defined as the number of positions in which these $t$-tuples differ, in a partial Hamming graph it coincides with the geodetic distance.

In this paper we obtain several simple methods for calculating the Steiner $k$-Wiener index of block graphs. In Section \ref{section:blockdecomposition} we present the block decomposition formula of the Steiner $k$-Wiener index of block graphs. 
In Section \ref{section:edgedecomposition} we present the edge decomposition formula of the Steiner $3$-Wiener index of block graphs.  In Section \ref{section:vertexdecomposition} we present the vertex decomposition formula of the Steiner $k$-Wiener index of block graphs and relate it to the $k$-Steiner betweenness centrality. 
In Section \ref{section:extremes} we study the graphs which minimize or maximize Steiner $k$-Wiener index among all block graphs with the same set of blocks, and obtain the sharp lower bound. To describe block graphs that maximize the Steiner $k$-Wiener index we introduce a special graph transformation called generalized block shift, which generalizes the generalized tree shift transformation introduced in \cite{csikvari2010poset} by Csikv{\'a}ri. 

\section{Block decomposition formula of Steiner $k$-Wiener index of block graphs}
\label{section:blockdecomposition}

Let $n(G)$ denote the number of vertices of a graph $G$. For a graph $G$ with $p$, $p >1$, connected components $G_1, G_2, \ldots, G_p$ we denote by $N_k(G)$ the sum over all partitions of $k$ into at least two nonzero parts of products of combinations distributed among the $p$ components of $F$:
$$
N_k(G)=\sum_{\substack{
l_1+l_2+\ldots+l_p=k\\
0\leq l_1,l_2,\ldots,l_p<k}} {n(G_1)\choose l_1} {n(G_2)\choose l_2} \ldots {n(G_p)\choose l_p} 
$$

For a graph $G$ and $e\in E(G)$, let $G - e$ denote a graph obtained by removing
edge $e$ from $G$. Then the following formula for a tree $T$ has been shown in \cite{gutman2015multicenter, kovse2016, gutman2016discuss} $$ SW_{k}(T)=\sum_{e \in E(T)}N_{k}(T -e).$$

For a given partition $l_1+l_2+\ldots+l_p=k$, let $\alpha(l_1,l_2,\ldots,l_p)$ denote the number of nonzero summands minus 1.
For a graph $G$ with $p$, $p >1$, connected components $G_1, G_2, \ldots, G_p$, we define $N_k'(G)$ to be the sum over all partitions of $k$ into at least two nonzero parts of products of combinations distributed among the $p$ components of $G$ multiplied by $\alpha(l_1,l_2,\ldots,l_p):$ 
$$
N_k'(G)=\sum_{\substack{
l_1+l_2+\ldots+l_p=k\\
0\leq l_1,l_2,\ldots,l_p<k}} {n(G_1)\choose l_1} {n(G_2)\choose l_2} \ldots {n(G_p)\choose l_p}  \cdot \alpha(l_1,l_2,\ldots,l_p).
$$
For a connected graph $G$, we define $N_k'(G)=0$. Note that by the definition ${n\choose 0}=1$, and ${n\choose k}=0$ whenever $n<k$.

Let $G \setminus B_i$ denote graph obtained from $G$ by deleting all edges from block $B_i$.

\begin{theorem}\label{thm1}
Let $G$ be a connected block graph with blocks $B_1, B_2,\ldots, B_t$.
Then $$SW_k(G) =  \sum_{i=1}^t N_k'(G \setminus B_i).$$
\end{theorem}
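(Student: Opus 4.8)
The plan is to prove that the Steiner distance of any single $k$-set $S$ decomposes additively over the blocks of $G$, and then to sum this identity over all $k$-sets, recognising the per-block total as $N_k'(G\setminus B_i)$. First I would record the structural fact that, for a block $B_i$ with $b_i = n(B_i)$ vertices, deleting all edges of $B_i$ splits $G$ into exactly $b_i$ connected components, one containing each vertex of $B_i$; write $C_1^{(i)}, \ldots, C_{b_i}^{(i)}$ for these, where $C_j^{(i)}$ contains the vertex $v_j\in V(B_i)$. This holds because every vertex of a block is either pendant or a cut vertex, so each $v_j$ together with everything hanging off it in the block-cut tree becomes its own component once the edges of $B_i$ are removed. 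For a $k$-set $S$ I set $\ell_j^{(i)} = |S \cap C_j^{(i)}|$, so that $(\ell_1^{(i)},\ldots,\ell_{b_i}^{(i)})$ records how $S$ is distributed across the components.

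The heart of the argument is the claim that
\[
d(S) = \sum_{i=1}^{t} \alpha\bigl(\ell_1^{(i)}, \ldots, \ell_{b_i}^{(i)}\bigr).
\]
To prove it I would fix a minimum Steiner tree $T$ for $S$ and, for each block $B_i$, examine the subgraph $T_i$ consisting of the edges of $T$ lying in $B_i$. Using the fact that in a block graph any simple path between two vertices of a common block stays inside that block, $T_i$ is forced to be a subtree: if it were disconnected, the path in $T$ between two of its pieces would lie entirely in $B_i$ and reconnect them. Since the edges of $G$ are partitioned by its blocks, $d(S) = |E(T)| = \sum_i |E(T_i)| = \sum_i (n(T_i)-1)$. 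For the lower bound, each component $C_j^{(i)}$ meeting $S$ can attach to the rest of $T$ only through the cut vertex $v_j$, so $T_i$ must contain all such $v_j$; being a connected subtree, it then has at least $c_i - 1$ edges, where $c_i$ is the number of components meeting $S$, and $c_i - 1 = \alpha(\ell_1^{(i)}, \ldots, \ell_{b_i}^{(i)})$. The matching upper bound follows by exhibiting a connected subgraph containing $S$ that, in each block $B_i$, uses a spanning tree on exactly the $c_i$ vertices through which terminals are reached, which shows the per-block minimum $c_i-1$ is attained simultaneously in every block.

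Next I would sum the displayed identity over all $k$-subsets $S$ and interchange the order of summation, obtaining
\[
SW_k(G) = \sum_{S} d(S) = \sum_{i=1}^{t} \sum_{S} \alpha\bigl(\ell_1^{(i)}, \ldots, \ell_{b_i}^{(i)}\bigr).
\]
For a fixed block $B_i$, grouping the $k$-sets according to their distribution vector $(\ell_1,\ldots,\ell_{b_i})$ yields exactly $\prod_{j} \binom{n(C_j^{(i)})}{\ell_j}$ sets for each admissible vector, each weighted by $\alpha(\ell_1,\ldots,\ell_{b_i})$; this is precisely the definition of $N_k'(G\setminus B_i)$. Vectors having some $\ell_j = k$ contribute nothing, since then only one part is nonzero and $\alpha = 0$, which is why imposing $0 \le \ell_j < k$ in the definition costs nothing. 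Substituting gives $SW_k(G) = \sum_{i=1}^t N_k'(G\setminus B_i)$.

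I expect the additive decomposition of $d(S)$ over the blocks to be the main obstacle: specifically, justifying that $T_i$ is a subtree and that the per-block lower bounds $c_i-1$ are simultaneously achievable by a single Steiner tree. Both points rest on the block-graph geometry — unique paths between same-block vertices staying within the block, and cut vertices isolating the components of $G\setminus B_i$. Once this decomposition is established, the remainder is routine binomial bookkeeping.
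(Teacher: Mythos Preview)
Your proposal is correct and follows the same strategy as the paper: both argue that each block $B_i$ contributes exactly $\alpha(\ell_1^{(i)},\ldots,\ell_{b_i}^{(i)})$ to the Steiner distance $d(S)$, and then sum over blocks and over $k$-sets. The paper's proof merely asserts this per-block contribution without justification, whereas you actually supply the argument (that $T\cap B_i$ is a subtree, the per-block lower bound $c_i-1$, and simultaneous attainability), so your write-up is in fact more complete than the original.
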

\begin{proof}   
$N_k'(G \setminus B_i)$ counts the contribution of $B_{i}$ to the Steiner $k$-Wiener index of vertices from $G \setminus B_i$. Let $G_{1},G_{2},\ldots G_{p}$ be the connected components of $G\setminus B_i$. For a given partition $l_1+l_2+\ldots+l_p=k$, let $\alpha(l_1,l_2,\ldots,l_p)$ denote the number of nonzero summands minus one. A block $B_{i}$ of $G$ contributes $\alpha(l_1,l_2,\ldots,l_p)$ to the Steiner distance of $k$ vertices from $G \setminus B_i$.  Then,
$$
N_k'(G \setminus B_{i})=\sum_{\substack{
l_1+l_2+\ldots+l_p=k\\
0\leq l_1,l_2,\ldots,l_p<k}} {n(G_1)\choose l_1} {n(G_2)\choose l_2} \ldots {n(G_p)\choose l_p}  \cdot \alpha(l_1,l_2,\ldots,l_p).
$$  
and the formula follows.
\end{proof}

The block decomposition of a block graph $G$, with blocks $B_1, B_2,\ldots, B_t$ of orders $b_1, b_2,\ldots, b_t$, allows us also to obtain isometric embedding into the Hamming graph $K_{b_1} \Box K_{b_2} \Box \ldots \Box K_{b_t}$ as follows. For $i \in \{1, \ldots, t \}$, we assign arbitrarily different values from $\{0,1, \ldots, b_i -1\}$ to the $i$-th coordinates of vertices from $B_i$. All vertices from the same connected component $C$ of $G \setminus B_i$ get the same value for the $i$-th coordinate as the already labeled vertex from $C \cap B_i$. Now we can show that the corresponding Hamming labelling of vertices of $G$ can be further used to calculate the Steiner distance on a subset of more than two vertices.

\begin{theorem}\label{thm1}
Let $G$ be a connected block graph on $n$ vertices with blocks $B_1, B_2,\ldots, B_t$, of orders $b_1 \leq b_2 \leq \ldots \leq b_t$. Let $S \subseteq V(G)$, $|S| = k$, $2\leq k \leq n$.

Then $$d(S) =  \sum_{i=1}^t \ell_i(S) - t,$$
where $\ell_i(S)$ denotes the number of different values assigned to the $i$-th coordinates of vertices from $S$.
\end{theorem}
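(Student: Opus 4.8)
The plan is to establish the formula by proving the two inequalities $d(S) \le \sum_{i=1}^t \ell_i(S) - t$ and $d(S) \ge \sum_{i=1}^t \ell_i(S) - t$ separately, exploiting the block-cut-tree structure of $G$ so that the Steiner distance splits into independent per-block contributions, each equal to $\ell_i(S)-1$.

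First I would pin down the combinatorial meaning of $\ell_i(S)$. Since $G$ is a connected block graph, deleting the edges of a block $B_i$ disconnects its $b_i$ vertices and produces exactly $b_i$ connected components, each containing precisely one vertex of $B_i$; this is immediate from the block-cut tree, in which the branches hanging off the distinct vertices of $B_i$ are pairwise disjoint (any path between two vertices of $B_i$ avoiding $E(B_i)$ would enlarge the block, contradicting maximality). By the labelling rule stated just before the theorem, every vertex of such a component $C$ receives, in coordinate $i$, the value assigned to the unique vertex $C \cap B_i$, so distinct components carry distinct $i$-th coordinate values. Hence $\ell_i(S)$ is exactly the number of components of $G \setminus B_i$ that meet $S$.

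For the lower bound I would use that the edge sets of distinct blocks are disjoint, so for any connected subgraph $H \supseteq S$ the count $|E(H)|$ may be bounded block by block. Contracting each component of $G \setminus B_i$ to a point collapses $G$ onto the clique $B_i$ and sends $S$ to a set of $\ell_i(S)$ marked clique vertices; because the only edges of $G$ joining different components are the edges of $B_i$, the (connected) image of $H$ spans these marked vertices using only $B_i$-edges, and so contains at least $\ell_i(S)-1$ edges of $B_i$. Summing over the disjoint $E(B_i)$ yields $|E(H)| \ge \sum_{i=1}^t(\ell_i(S)-1) = \sum_{i=1}^t \ell_i(S) - t$, hence $d(S) \ge \sum_{i=1}^t \ell_i(S) - t$.

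For the matching upper bound I would exhibit an explicit tree: in each block $B_i$ take a spanning tree, using $\ell_i(S)-1$ edges, on the $\ell_i(S)$ vertices of $B_i$ whose components meet $S$, and set $H$ to be the union of these partial trees. Its edge count is exactly $\sum_{i=1}^t \ell_i(S) - t$, and every $s \in S$ lies in $H$ because, choosing any block $B_j$ containing $s$, the component of $s$ in $G \setminus B_j$ is active and its representative is $s$ itself. It then remains only to verify that $H$ is connected, and I expect this to be the main obstacle: one must argue, via the tree-like arrangement of blocks around shared cut vertices, that the locally chosen partial trees glue together into a single tree with no gaps and no redundancy — precisely the property that distinguishes block graphs from general graphs. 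Once connectivity is secured the bounds coincide and the formula follows; as a sanity check, for $k=2$ each $\ell_i(\{u,v\}) \in \{1,2\}$, so $\sum_i(\ell_i(\{u,v\})-1)$ counts the coordinates in which $u$ and $v$ differ, recovering the isometry of the Hamming embedding.
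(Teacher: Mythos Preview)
Your plan is the rigorous version of what the paper merely asserts: the paper's entire proof is the single sentence ``if all vertices of $S$ lie in one component of $G\setminus B_i$ the contribution of $B_i$ to $d(S)$ is $0$, otherwise it is $\ell_i(S)-1$,'' with no further justification. So your per-block decomposition is exactly the paper's idea, and your two-inequality argument is the honest way to substantiate it; the lower bound via contraction onto $B_i$ is clean and correct.

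For the upper bound there is one small correction before the connectivity step you flagged. Blocks with $\ell_i(S)=1$ must contribute nothing to $H$, not an isolated active vertex; otherwise (take $G$ a path $a\text{--}b\text{--}c\text{--}d$ with $S=\{a,b\}$) the far blocks deposit stray singletons and $H$ is disconnected for a trivial reason. So let $H$ be the union of the chosen spanning trees over only those blocks with $\ell_i(S)\ge 2$. Your argument that every $s\in S$ is a vertex of $H$ still goes through: the first edge of the unique shortest $s$--$s'$ path (any $s'\in S\setminus\{s\}$) lies in some block $B_j$; since shortest paths in a block graph use at most one edge per block, $s'$ sits in a different component of $G\setminus B_j$ than $s$, so $\ell_j(S)\ge 2$ and $s$ is an active endpoint in $B_j$.

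For connectivity itself, induct along the block--cut tree by peeling off a pendant block $B_t$ with cut vertex $v$, writing $G'=G-(B_t\setminus\{v\})$. If $\ell_t(S)=1$ then $S\subseteq V(G')$, the active sets in every $B_i$, $i<t$, are unchanged, and $H$ coincides with the tree for $(G',S)$. If $S\subseteq B_t$ then every $\ell_i(S)=1$ for $i<t$ and $H$ is just the spanning tree on $S$ inside $B_t$. In the remaining case $\ell_t(S)\ge 2$ and $S\not\subseteq B_t$; then $v$ is active in $B_t$, and setting $S'=(S\cap V(G'))\cup\{v\}$ (which has $|S'|\ge 2$) one checks that for each $i<t$ the component of $u\in B_i$ in $G\setminus B_i$ meets $S$ iff the component of $u$ in $G'\setminus B_i$ meets $S'$. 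Hence the portion of $H$ inside $G'$ is exactly the inductively connected tree for $(G',S')$, and the $B_t$-spanning tree attaches to it at the shared active vertex $v$. This closes the gap and yields $d(S)\le\sum_i\ell_i(S)-t$, matching your lower bound.
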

\begin{proof}   
If all vertices from $S$ belong to the same connected component of $G \setminus B_i$ then the contribution of the block $B_i$ to the Steiner distance between vertices from $S$ is 0 in all other cases it is equal to $\ell_i(S) -1$, and hence the formula follows.
\end{proof}

\begin{ill}
Consider the block graph $G$ from the Figure \ref{figblock}. Let $\ell(v)$ denote the Hamming labelling induced by an isometric embedding into $K_{2} \Box K_{3} \Box K_{4}$. Then one possible labeling of vertices of $G$ reads as follows:
$\ell(v_1)=(0,0,0)$, $\ell(v_2)=(0,1,0)$, $\ell(v_3)=(0,2,0)$, $\ell(v_4)=(1,2,0)$, $\ell(v_5)=(1,2,1)$, $\ell(v_6)=(1,2,2)$ and $\ell(v_7)=(1,2,3)$. Thus for $S=\{v_1, v_2, v_5, v_6\}$, we have $\ell_1(S)=2, \ell_2(S)=\ell_3(S) = 3$, and hence $d(S)=2+3 +3 -3= 5$.

\begin{center}
\begin{figure}[h]
\centering
  \includegraphics[width= 60mm]{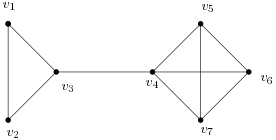}
  \caption{A block graph $G$ with vertices $v_1, v_2, v_3, v_4, v_5, v_6, v_7$ and blocks of orders $b_1=2,b_2=3$ and $b_3=4$.}
  \label{figblock}
\end{figure}
\end{center}
\end{ill}

\section{Edge decomposition of the Steiner $k$-Wiener index of block graphs}
\label{section:edgedecomposition}
For an edge $ab \in E(G)$ we denote $W_{ab} = \{ v\in V(G)\ | \ d_G(a,v) < d_G(b,v) \},$ $ W_{ba} = \{ v\in V(G)\ | \ d_G(b,v) < d_G(a,v) \}$, and
$_{a}W_{b} = \{ v\in V(G)\ | \ d_G(a,v) =d_G(b,v) \}$. Hence for any edge $ab \in E(G),$ vertices of $G$ are partitioned into three sets: $W_{ab}$ - a set of vertices that are closer to $a$ than to $b$, $W_{ba}$ - a set of vertices that are closer to $b$ than to $a$ and $_{a}W_{b}$ - a set of vertices that are at the same distance to $a$ and $b$. If $G$ is a bipartite graph, then $_{a}W_{b} = \emptyset$. If $ab$ is a cut edge then also $_{a}W_{b} = \emptyset$. Additionaly we denote $N_{ab}= |W_{ab}|, N_{ba}=|W_{ba}|$ and $_aN_{b}=|_{a}W_{b}|$. Note that for a pendant vertex $a$ and its neighbour $b$ in a block graph $G$, $N_{ab}=1$, and $_aN_{b} = |V(G)|-2$. For a cut vertex $a$ and its neighbour $b$ in a block graph $G$, $N_{ab}>1$.

\begin{theorem}\label{thm.Wedge} Let $G$ be a connected block graph on $n$ vertices. Then
\begin{equation*}
\displaystyle W(G) = \sum_{\substack{ab \in E(G)}}N_{ab} \cdot N_{ba}.
\end{equation*}
\end{theorem}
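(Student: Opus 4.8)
The plan is to swap the order of summation so that the edge-indexed sum $\sum_{ab} N_{ab}N_{ba}$ becomes a pair-indexed sum, and then to show that each unordered pair $\{u,v\}$ contributes exactly $d(u,v)$. Since for every edge $ab$ the sets $W_{ab}$ and $W_{ba}$ are disjoint, the product $N_{ab}N_{ba}$ counts precisely the unordered pairs $\{u,v\}$ having one endpoint in $W_{ab}$ and the other in $W_{ba}$; call such a pair \emph{separated} by $ab$. Interchanging the two summations gives
\[
\sum_{ab\in E(G)} N_{ab}\,N_{ba} \;=\; \sum_{\{u,v\}} \bigl|\{\,ab\in E(G): ab \text{ separates } \{u,v\}\,\}\bigr|,
\]
so it suffices to prove that the number of edges separating $\{u,v\}$ equals $d(u,v)$.

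For this I would invoke the isometric embedding of $G$ into the Hamming graph $K_{b_1}\Box\cdots\Box K_{b_t}$ described above, writing $\ell_i(w)$ for the $i$-th coordinate of the label of $w$, so that $d(u,v)=\sum_{i=1}^t [\ell_i(u)\neq \ell_i(v)]$ by isometry. Each edge $ab$ lies in a unique block $B_i$, and its endpoints differ only in coordinate $i$; computing $d(a,w)-d(b,w)$ coordinatewise (only the $i$-th term survives) then shows $W_{ab}=\{w:\ell_i(w)=\ell_i(a)\}$ and $W_{ba}=\{w:\ell_i(w)=\ell_i(b)\}$. Hence $ab$ separates $\{u,v\}$ if and only if $\{\ell_i(u),\ell_i(v)\}=\{\ell_i(a),\ell_i(b)\}$, which in particular forces $\ell_i(u)\neq\ell_i(v)$.

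Next I would count, for each fixed coordinate $i$, the edges of $B_i$ that separate $\{u,v\}$. Since $B_i$ is a clique whose vertices carry pairwise distinct coordinate-$i$ values $\{0,\dots,b_i-1\}$, and every vertex of $G$ inherits one of these values as its $i$-th coordinate, there is exactly one edge of $B_i$ realizing any prescribed pair of distinct values. Therefore $B_i$ contributes one separating edge when $\ell_i(u)\neq\ell_i(v)$ and none when $\ell_i(u)=\ell_i(v)$, i.e.\ exactly $[\ell_i(u)\neq\ell_i(v)]$ separating edges. Summing over all blocks yields $\sum_{i=1}^t [\ell_i(u)\neq\ell_i(v)]=d(u,v)$, and substituting back completes the argument.

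The forward half (each edge on a geodesic separates the pair) is immediate, so the real content is the converse: that no further edges separate $\{u,v\}$ and that each relevant block contributes at most once. I expect this to be the main obstacle if attacked directly, because geodesics in $G$ need not be unique. The Hamming embedding is what resolves the difficulty cleanly, since it converts ``separation'' into a single coordinate condition and reduces the per-block count to the elementary fact that a clique realizes each pair of coordinate values by a unique edge.
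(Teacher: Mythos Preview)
Your argument is correct, but it takes a considerably longer route than the paper's. The paper dispatches the theorem in two lines by invoking the standard fact that in a block graph every pair of vertices is joined by a \emph{unique} shortest path. Given uniqueness, ``$ab$ separates $\{u,v\}$'' is equivalent to ``$ab$ lies on the geodesic from $u$ to $v$'', so the number of separating edges is exactly the length $d(u,v)$, and the double-counting finishes immediately.

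Your remark that ``geodesics in $G$ need not be unique'' is the misconception that pushed you toward the Hamming-embedding machinery; in block graphs geodesics \emph{are} unique (the sequence of cut vertices between $u$ and $v$ is forced, and within each complete block one traverses a single edge between consecutive cut vertices). That said, your approach has its own merits: it makes explicit that each block contributes at most one separating edge per pair, and the coordinate description of $W_{ab}$, $W_{ba}$, ${}_aW_b$ you derive is exactly what one needs later for the $SW_3$ formulas, so the extra work is not wasted. But for this particular theorem, recognising uniqueness of geodesics is the intended shortcut.
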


\begin{proof}
Let $u,v \in V(G)$. Note that any pair of vertices in a block graph is connected by a unique shortest path. Edge $ab$ appears on a shortest path between $u$ and $v$ if and only if $ W_{ab}$ contains one of the vertices $u,v$ and $W_{ba}$ the other vertex. The expression on the right side of the formula thus counts the number of times edge $ab$ appears as an edge in a shortest path between a pair of vertices $u$ and $v$, hence contributing 1 to $d(u,v)$.
\end{proof}

\begin{theorem}\label{thm.SW3edge} Let $G$ be a connected block graph on $n$ vertices. Then
\begin{equation*}
SW_3(G) = \sum_{\substack{ab \in E(G)}}N_{ab} \cdot N_{ba} + \frac 23 \sum_{ab \in E(G)} N_{ab} \cdot N_{ba} \cdot {_aN_{b}}.
 \end{equation*}
\end{theorem}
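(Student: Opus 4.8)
The plan is to imitate the edge-counting proof of Theorem \ref{thm.Wedge}, replacing ``edge lies on the unique geodesic of a pair'' by ``edge lies in a minimal Steiner tree of a triple'', and then to average over minimal Steiner trees to cope with the fact that, unlike geodesics, Steiner trees are not unique inside cliques. Writing $SW_3(G)=\sum_{|S|=3}d(S)$, I would express each $d(S)$ as a sum of edge contributions and interchange the order of summation, so that
$$SW_3(G)=\sum_{ab\in E(G)}c(ab),\qquad c(ab)=\sum_{|S|=3}w_S(ab),$$
where $w_S(ab)=1$ if the minimal Steiner tree of $S$ is unique and contains $ab$, is $0$ if it is unique and avoids $ab$, and otherwise equals the fraction of minimal Steiner trees of $S$ that contain $ab$. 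The block-decomposition of the Steiner distance proved above, $d(S)=\sum_{\text{blocks }K}(\ell_K(S)-1)$ with $\ell_K(S)$ the number of distinct entry points of $S$ in $K$, guarantees these fractional weights are consistent: inside each block the weights of its edges should sum to $\ell_K(S)-1$, so that $\sum_{ab}w_S(ab)=d(S)$ for every triple $S$.

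Next I would localize to a single edge $ab$ lying in a block $K$. As in the Wiener proof, deleting the edges of $K$ splits $G$ into the $a$-branch $W_{ab}$, the $b$-branch $W_{ba}$, and the union $_{a}W_{b}$ of all remaining branches of $K$ (this last set being empty exactly when $ab$ is a cut edge). The contribution $c(ab)$ then splits according to how the three terminals of $S$ meet these parts, and only two configurations make $ab$ appear in a minimal Steiner tree: (i) the clique-vertices used by $S$ are exactly $a$ and $b$, so all three terminals project to $a$ or $b$ and the edge $ab$ is forced with weight $1$; and (ii) $S$ uses three distinct entry points, one terminal projecting into $W_{ab}$, one into $W_{ba}$, and one into a branch inside $_{a}W_{b}$, in which case a minimal Steiner tree joins the three entry points by two of the three clique-edges spanning them.

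Configuration (ii) is where the factor $\tfrac23$ originates, and it is the step I expect to be the main obstacle. There are exactly $N_{ab}\cdot N_{ba}\cdot {}_{a}N_{b}$ such triples; for each, the three entry points span a triangle inside $K$, a minimal Steiner tree picks any two of its three edges, and by the symmetry of the clique each edge lies in exactly two of the three minimal trees and so carries weight $\tfrac23$. Summing over all edges then reproduces the term $\tfrac23\sum_{ab}N_{ab}\,N_{ba}\cdot {}_{a}N_{b}$, and the crux is to make this averaging over non-unique Steiner trees rigorous rather than merely plausible. Configuration (i) I would treat analogously, counting the triples drawn from $W_{ab}\cup W_{ba}$ that meet both sides; the delicate point will be to verify that, after summing over all edges, these weight-$1$ contributions collapse to exactly $\sum_{ab}N_{ab}\,N_{ba}$. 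This reconciliation between ``pairs separated by $ab$'' and ``triples whose Steiner tree uses $ab$'' is the part of the argument I would scrutinize most carefully, since it is the least automatic consequence of the edge-counting scheme.
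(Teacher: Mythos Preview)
Your strategy coincides with the paper's: both decompose $SW_3$ edge by edge and split according to whether the three block-gates of $S$ at the edge $ab$ are exactly $\{a,b\}$ (your configuration~(i), the paper's Case~1) or three distinct clique vertices including $a$ and $b$ (configuration~(ii), Case~2). Your treatment of configuration~(ii) is the paper's: three minimal trees on the gate-triangle, each using two of its three edges, so $ab$ carries weight $\tfrac23$ on each of the $N_{ab}N_{ba}\cdot{}_aN_b$ such triples. That part is fine.

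The step you single out as ``the least automatic'' is precisely where both your outline and the paper's proof fail. The number of configuration-(i) triples for $ab$ is
\[
\binom{N_{ab}}{2}N_{ba}+N_{ab}\binom{N_{ba}}{2}=N_{ab}N_{ba}\,\frac{N_{ab}+N_{ba}-2}{2},
\]
and there is no collapse of this, after summing over edges, to $\sum_{ab}N_{ab}N_{ba}$. The paper's Case~1 simply asserts ``altogether this happens $N_{ab}\cdot N_{ba}$ times'', which is a pair count, not a triple count. Indeed the identity as stated is false: for any tree ${}_aN_b=0$, so it would force $SW_3(T)=W(T)$, yet $SW_3(P_3)=2$ while $W(P_3)=4$; for $K_3$ the formula gives $3+\tfrac23\cdot3=5$ instead of $2$; for the bowtie $K_3\!*\!K_3$ it gives $26$ instead of $24$. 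Your averaging scheme is sound and, carried through honestly, produces the corrected first term $\sum_{ab}\bigl[\binom{N_{ab}}{2}N_{ba}+N_{ab}\binom{N_{ba}}{2}\bigr]$; what cannot be completed is the reconciliation you were right to doubt, because the target statement itself is in error.
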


\begin{proof}
Let $\{u,v,w\}=S \subseteq V(G)$. Edge $ab$ appears in some Steiner tree connecting vertices of $S$ if and only if $S \cap W_{ab} \neq \emptyset$ and $S \cap W_{ba} \neq \emptyset$. The expression on the right side counts the contribution of an edge $ab$ to Steiner 3-Wiener index of $G$. We distinguish two cases.\\

{\bf Case 1.} $S \cap W_{ab} \neq \emptyset$, $S \cap W_{ba} \neq \emptyset$ and $S \cap _aW_{b} = \emptyset$.\\
In this case $ab$ appears in any Steiner tree connecting vertices of $S$, therefore it contributes 1 to $d(S)$, altogether this happens $N_{ab} \cdot N_{ba}$ times. \\

{\bf Case 2.} $S \cap W_{ab} \neq \emptyset$, $S \cap W_{ba} \neq \emptyset$ and $S \cap _aW_{b} \neq \emptyset$.\\
In this case each of $W_{ab}, W_{ba}$ and $_aW_{b}$ includes one of the vertices from $S$. W. l. o. g. let $u \in W_{ab}$, $v \in W_{ba}$ and $w \in _aW_{b}$.
Let $c$ be a common neighbour of $a$ and $b$ that lies on a shortest path between $a$ and $w$ (as well on a shortest path between $b$ and $w$). Any Steiner tree on $S$ must include $a,b$ and $c$. Note that $d(\{a,b,c\})=2$ and there are three different Steiner trees connecting $a,b$ and $c$, each of them induced by a pair of edges among $ab, ac$ and $bc$. Therefore the contribution of each edge among $ab, ac$ and $bc$ to the Steiner distance on $S$ is $\frac 23$. There are exactly $N_{ab} \cdot N_{ba} \cdot {_aN_{b}}$ number of $\{u,v,w\}$ sets with the property that any Steiner tree connecting $\{u,v,w\}$ includes $a$ and $b$ and some common neighbour $c$ of $a$ and $b$. Since the contribution to the Steiner distance $d(\{u,v,w\})$ of each edge among $ab,ac$ and $bc$ is defined to be $\frac 23$, the common contribution of  $ab,ac$ and $bc$  in the sum  $\frac 23 \sum_{ab \in E(G)} N_{ab} \cdot N_{ba} \cdot {_aN_{b}}$ equals $\frac 23 + \frac 23 +\frac 23 = 2$, which is exactly the contribution of $\{a,b,c\}$ to the Steiner distance $d(\{u,v,w\})$.
\end{proof}

\begin{theorem}\label{thm.SW3edge2}
Let $G$ be a connected block graph on $n$ vertices. Then

\begin{equation*}
\displaystyle SW_3(G) = \left(\sum_{\substack{ab \in E(G)}} \left[ {n \choose 3} - {n - N_{ab} \choose 3} - {n - N_{ba} \choose 3} - {{_aN_{b}}\choose 3} \right] \right) - \frac 13 \sum_{ab \in E(G)} N_{ab} \cdot N_{ba} \cdot {_aN_{b}}.
\end{equation*}
\end{theorem}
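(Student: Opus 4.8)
The plan is to deduce Theorem~\ref{thm.SW3edge2} from Theorem~\ref{thm.SW3edge} by rewriting, edge by edge, the triple-counts occurring there in terms of binomial coefficients. Recall from the proof of Theorem~\ref{thm.SW3edge} that an edge $ab$ contributes to the Steiner distance of a triple $S$ precisely when $S$ meets both $W_{ab}$ and $W_{ba}$, and that such triples fall into two families: the Case~1 triples, which meet $W_{ab}$ and $W_{ba}$ but avoid ${}_aW_b$ and each contribute $1$, and the Case~2 triples, which meet all three parts and each contribute $\tfrac23$. Thus the contribution of a single edge to $SW_3(G)$ is $1\cdot(\text{number of Case~1 triples})+\tfrac23\cdot(\text{number of Case~2 triples})$, and $SW_3(G)$ is the sum of these contributions over $E(G)$. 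The first step is to evaluate both counts in closed form.

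For a fixed edge $ab$ the vertex set splits as $V(G)=W_{ab}\sqcup W_{ba}\sqcup {}_aW_b$, so I would count the triples meeting both $W_{ab}$ and $W_{ba}$ by complementary counting over this three-part partition: starting from all $\binom{n}{3}$ triples, subtract those avoiding $W_{ab}$ (they lie in $W_{ba}\cup {}_aW_b$, giving $\binom{n-N_{ab}}{3}$) and those avoiding $W_{ba}$ (giving $\binom{n-N_{ba}}{3}$), and then restore the triples avoiding both, namely those contained in ${}_aW_b$ (there are $\binom{{}_aN_b}{3}$ of them). This yields a closed form for the combined number of Case~1 and Case~2 triples. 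The Case~2 triples are counted directly: such a triple has exactly one vertex in each part, so there are $N_{ab}\cdot N_{ba}\cdot{}_aN_b$ of them, and subtraction then gives the Case~1 count.

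Feeding these into $1\cdot(\text{Case~1})+\tfrac23\cdot(\text{Case~2})$, the weight $1$ contributes the full binomial count from the previous step, while downgrading the Case~2 weight from $1$ to $\tfrac23$ removes exactly $\tfrac13\,N_{ab}N_{ba}\,{}_aN_b$ from each edge's contribution. Summing over all $ab\in E(G)$ then assembles a bracketed binomial sum together with the single term $-\tfrac13\sum_{ab}N_{ab}N_{ba}\,{}_aN_b$, matching the shape of the statement. As consistency checks I would confirm the cut-edge case ${}_aN_b=0$, where the fractional term vanishes and the bracket collapses to the tree contribution $\binom{n}{3}-\binom{n-N_{ab}}{3}-\binom{n-N_{ba}}{3}$, and re-derive the known values for complete graphs.

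I expect the main obstacle to be the inclusion--exclusion bookkeeping on the three-part partition: pinning down the sign with which $\binom{{}_aN_b}{3}$ enters the count of triples meeting both $W_{ab}$ and $W_{ba}$, and reconciling that sign with the $\tfrac13$-correction produced by the fractional Case~2 weight, since the quantity $N_{ab}N_{ba}\,{}_aN_b$ feeds into both the combined count and the downgrade. Getting this interaction exactly right is what fixes the precise coefficients in the final identity. A secondary point, again resting on the block-graph structure exploited in Theorem~\ref{thm.SW3edge}, is to verify that every triple meeting both $W_{ab}$ and $W_{ba}$ lands in exactly one of Case~1 or Case~2, so that no triple is ever weighted incorrectly.
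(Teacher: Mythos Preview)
Your proposal is correct and follows essentially the same route as the paper. The paper introduces $n_3(ab)$, the number of triples whose Steiner trees use $ab$, computes it by listing the five complementary configurations (rather than your two-event inclusion--exclusion on ``meets $W_{ab}$'' and ``meets $W_{ba}$''), and then applies exactly your $-\tfrac13\,N_{ab}N_{ba}\,{}_aN_b$ adjustment by appealing to Case~2 of Theorem~\ref{thm.SW3edge}; the quantity $n_3(ab)$ is precisely your ``combined Case~1${}+{}$Case~2'' count, so the two arguments coincide.
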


\begin{proof} For $ab \in E(G)$ let $n_3(ab)$ denote the number of subsets $S \subseteq V(G)$, $|S|=3$, for which $ab$ appears as an edge in a Steiner tree connecting vertices of $S$. Note that an edge $ab$ will not be contained in any Steiner tree connecting vertices of $S$ if and only if one of the cases holds:
case 1: $S \subseteq W_{ab}$, case 2: $S \subseteq W_{ba}$, case 3: $S \subseteq \, _{a}W_{b}$, case 4: $S \cap \, W_{ab} \neq \emptyset$, $S \cap \, _{a}W_{b} \neq \emptyset$ and $S \cap \,W_{ba} = \emptyset$, case 5: $S \cap \, W_{ba} \neq \emptyset$, $S \cap \, _{a}W_{b} \neq \emptyset$ and $S \cap \,W_{ab} = \emptyset$. Therefore it follows
\begin{align*}
n_3(ab)&= {n \choose 3} - {N_{ab}\choose 3} - {N_{ba}\choose 3} - { _aN_{b}\choose 3} 
-\left[{N_{ab} +  \,_aN_{b}\choose 3} - {N_{ab}\choose 3} - {_aN_{b}\choose 3} \right] \\ &-\left[ {N_{ba} + \, _aN_{b}\choose 3} - {N_{ba}\choose 3} - {_aN_{b}\choose 3} \right]
={n \choose 3} - {n - N_{ba} \choose 3} - {n - N_{ab} \choose 3} -  {{_aN_{b}}\choose 3}.
\end{align*}
 

Note that $\displaystyle{\sum_{ab \in E(G)} n_3(ab)}$ has to be adjusted because of the double counting of the common edge contributions of $ab, ac$ and $bc$ to the Steiner distances of three vertices $u,v$ and $w$ as in the Case 2 in the proof of Theorem \ref{thm.SW3edge}. For an edge $ab$, the number of such sets equals $N_{ab} \cdot N_{ba} \cdot {_aN_{b}}$.  The common contribution of $ab, ac$ and $bc$ to the Steiner 3-Wiener index in $\sum_{ab \in E(G)} n_3(ab)$ is three instead of two. In $\sum_{ab \in E(G)} N_{ab} \cdot N_{ba} \cdot {_aN_{b}}$ each such triple $a,b$ and $c$ is counted 3 times, hence if we multiply the sum by $- \frac 13$ we get the common contribution of $ab, ac$ and $bc$ in $\sum_{ab \in E(G)} N_{ab} \cdot N_{ba} \cdot {_aN_{b}}$  is $-1$ and therefore the altogether contribution of $a,b$ and $c$ in $\sum_{ab \in E(G)} n_3(ab) - \frac 13 \sum_{ab \in E(G)} N_{ab} \cdot N_{ba} \cdot {_aN_{b}}$  is 2, hence:
$$SW_3(G) = \sum_{ab \in E(G)} n_3(ab) - \frac 13 \sum_{ab \in E(G)} N_{ab} \cdot N_{ba} \cdot {_aN_{b}}.$$
\end{proof}

Not that for $k \geq 4$ no simple formula for the edge decomposition of the Steiner $k$-Wiener index of block graph $G$ seems to exist. The reason is that there are more and more different cases which needed to be considered when more than one common neighbour of $a$ and $b$ is included in some Steiner tree for a set of $k$ vertices.

\section{Vertex decomposition of the Steiner $k$-Wiener index of block graphs}
\label{section:vertexdecomposition}

For a graph $G$ and $v \in V (G)$, let $G \setminus v$ denote a graph obtained by removing
$v$ from $G$. Note that $G\setminus v$ may consists of several components and that their number equals the degree of $v$.
\begin{theorem}\label{thm2}
Let $G$ be a connected  block graph on $n$ vertices with the set of cut vertices $V_{c}(G)$ and $k$ an integer such that $2 \leq k \leq n$. Then
$$SW_k(G) =  \sum_{v \in V_{c}(G)} N_k(G \setminus v)+(k-1){n\choose k}.$$
\end{theorem}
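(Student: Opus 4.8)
The plan is to count $SW_k(G)$ by summing the Steiner distance $d(S)$ over all $k$-subsets $S$, but to rewrite each $d(S)$ as a count of \emph{vertices} rather than edges. Fix a $k$-subset $S$ and let $T_S$ be a minimum Steiner tree connecting $S$. Since $T_S$ is a tree, $d(S)=|E(T_S)|=|V(T_S)|-1$, so $d(S)=(k-1)+|V(T_S)\setminus S|$; that is, the Steiner distance exceeds $k-1$ by exactly the number of inner (non-terminal) vertices of $T_S$. Summing over all $S$ gives
$$SW_k(G)=(k-1)\binom{n}{k}+\sum_{\substack{S\subseteq V(G)\\|S|=k}}|V(T_S)\setminus S|,$$
so it remains to show that the last sum equals $\sum_{v\in V_c(G)}N_k(G\setminus v)$.

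The key structural claim is that the inner vertices of a minimum Steiner tree in a block graph are exactly the cut vertices that \emph{separate} $S$. First, no pendant vertex can be an inner vertex: a pendant vertex $w$ lies in a single block $B$ and all its neighbours lie in the clique $B$, so if $w$ had degree at least $2$ in $T_S$ one could replace the edges of $T_S$ at $w$ by a spanning path of its tree-neighbours inside $B$, thereby removing $w$ and strictly decreasing the number of edges, contradicting minimality. Hence every inner vertex is a cut vertex. Second, a cut vertex $v\notin S$ belongs to $V(T_S)$ if and only if $S$ meets at least two connected components of $G\setminus v$: if all of $S$ lies in a single component then the optimal tree stays inside that component and avoids $v$, while if $S$ spans two components then every connected subgraph containing $S$ must pass through the cut vertex $v$. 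In particular, membership of $v$ in $V(T_S)$ depends only on $S$ and not on the (possibly non-unique) choice of minimum Steiner tree, so $|V(T_S)\setminus S|$ is well defined.

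With this claim in hand, I would interchange the order of summation:
$$\sum_{\substack{S\subseteq V(G)\\|S|=k}}|V(T_S)\setminus S|=\sum_{v\in V_c(G)}\left|\left\{S:\ |S|=k,\ v\notin S,\ v\in V(T_S)\right\}\right|.$$
For a fixed cut vertex $v$, the inner term counts the $k$-subsets $S\subseteq V(G)\setminus\{v\}$ that meet at least two connected components of $G\setminus v$. Writing $G_1,\dots,G_p$ for those components (with $p=\deg(v)\geq 2$, since $v$ is a cut vertex), a Vandermonde-type argument shows this count equals $\binom{n-1}{k}-\sum_{j}\binom{n(G_j)}{k}$, which is exactly the value $N_k(G\setminus v)$ by its definition as the sum over partitions of $k$ into at least two nonzero parts. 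Substituting back yields the stated formula.

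The main obstacle I anticipate is the structural claim rather than the bookkeeping: one must argue with care that a minimum Steiner tree in a block graph uses no pendant vertex as an interior vertex (the clique-shortcut argument above), and that the collection of cut vertices appearing in a minimum Steiner tree is independent of the choice of tree, so that the inner-vertex count equals the number of separating cut vertices lying outside $S$. Once this is established, the remaining interchange of summation and identification with $N_k(G\setminus v)$ is routine.
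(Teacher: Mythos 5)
Your proof is correct and follows essentially the same route as the paper's: rewrite $d(S)$ as $(k-1)$ plus the number of inner vertices of a minimum Steiner tree, interchange the order of summation over $S$ and over cut vertices, and identify the number of $k$-sets separated by a cut vertex $v$ with $N_k(G\setminus v)$. You additionally supply the structural justification (pendant vertices are never inner vertices, and a cut vertex lies in a minimum Steiner tree exactly when $S$ meets at least two components of $G\setminus v$) that the paper's brief proof leaves implicit; the only quibble is that the number of components of $G\setminus v$ is the number of blocks containing $v$ rather than $\deg(v)$, a slip inherited from the paper that does not affect the argument since only $p\geq 2$ is used.
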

\begin{proof}
$N_k(G \setminus v)$ counts the number of times a cut vertex $v$ is an inner vertex of Steiner tree. Since each such vertex adds 1 to the Steiner distance of a set of $k$ vertices, Steiner distance between $k$ vertices is by $k -1$ greater than the number of inner vertices in the corresponding Steiner tree, adding $k-1$ for each set of $k$ vertices, we get the sum of Steiner distances between all $k$
sets of vertices, and the equality in formula holds.
\end{proof}

Special case of the Theorem \ref{thm2} on trees has been proved in \cite{kovse2016} by Kov\v{s}e, who also introduced the $k$-Steiner betweenness centrality $B_k(v)$ of a vertex $v\in V (G)$ as the sum of the fraction of all Steiner trees connecting $k$ terminal vertices, that include $v$ as its inner vertex, across all Steiner trees connecting the same set of $k$ terminal vertices:
\begin{equation*}
B_k(v)= \sum_{
\substack{
A\subseteq V(G)\setminus\{v\}\\
|A|=k}
}
\frac{\sigma_A(v)}{\sigma_A},
\end{equation*}
\noindent where $\sigma_{A}$ denotes the number of all Steiner trees between vertices of $A$ in a graph $G$ and $\sigma_{A}(v)$ denotes the number of all Steiner trees between vertices of $A$ in a graph $G$ that include $v$ as an inner vertex. 

As pendant vertices do not appear as inner vertices of any Steiner tree of a block graph $G$, it follows that $B_k(v)=0$ for any pendant vertex $v$ and $B_k(v) > 0$ for all other vertices, if $k < n$. In fact $B_k(v) = N_k(G \setminus v)$ and we can now restate Theorem \ref{thm2} as follows.

\begin{theorem}\label{sbetweenness}
Let $G$ be a connected  block graph on $n$ vertices with the set of cut vertices $V_{c}(G)$ and $k$ an integer such that $2 \leq k \leq n$. Then
\begin{equation*}
SW_k(G) = \sum_{v\in V_c(G)}B_k(v) + (k-1)\binom{n}{k}.
\end{equation*}
\end{theorem}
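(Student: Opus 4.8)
The plan is to reduce this statement to Theorem~\ref{thm2}, which already expresses $SW_k(G)$ as $\sum_{v\in V_c(G)} N_k(G\setminus v) + (k-1)\binom{n}{k}$. Since the two theorems differ only in that the summand $N_k(G\setminus v)$ is replaced by $B_k(v)$, the whole content reduces to establishing the identity $B_k(v)=N_k(G\setminus v)$ for every cut vertex $v$; substituting this equality term by term into the sum from Theorem~\ref{thm2} then yields the claim at once. So I would devote the proof entirely to verifying $B_k(v)=N_k(G\setminus v)$.

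To prove $B_k(v)=N_k(G\setminus v)$, I would first show that for a cut vertex $v$ and any terminal set $A\subseteq V(G)\setminus\{v\}$ with $|A|=k$, the ratio $\sigma_A(v)/\sigma_A$ takes only the values $0$ or $1$. Let $G_1,\dots,G_p$ be the components of $G\setminus v$. If $A$ meets two distinct components $G_i,G_j$, then every connected subgraph containing $A$ must contain a path between $G_i$ and $G_j$, and since $v$ separates these components such a path runs through $v$; as $v\notin A$, it cannot be a leaf of a minimal tree, so it is an inner vertex of \emph{every} Steiner tree on $A$, giving $\sigma_A(v)=\sigma_A$ and ratio $1$. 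If instead all of $A$ lies in a single component $G_j$, then a minimal connected subgraph chosen inside $G_j$ contains $A$ without using $v$, and moreover no minimum Steiner tree can use $v$: were $v$ inner in such a tree, its two neighbours $x,y$ in the tree would lie in the same block $B\ni v$, hence be adjacent (as $B$ is a clique), so replacing the path $x\text{--}v\text{--}y$ by the edge $xy$ would produce a strictly smaller connected subgraph containing $A$, contradicting minimality. Thus the ratio is $0$ here, and the dichotomy is established.

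It follows that $B_k(v)$ simply counts the $k$-subsets $A$ of $V(G)\setminus\{v\}$ that meet at least two of the components $G_1,\dots,G_p$. I would then match this count against the combinatorial definition of $N_k(G\setminus v)$: distributing the $k$ chosen vertices among the components as $l_1+\dots+l_p=k$ amounts to choosing $l_i$ vertices from $G_i$, contributing $\prod_i \binom{n(G_i)}{l_i}$, and the constraint $0\le l_i<k$ excludes exactly the partitions placing all $k$ vertices in a single component. Hence $N_k(G\setminus v)$ enumerates precisely the $k$-subsets spanning at least two components, so $B_k(v)=N_k(G\setminus v)$, completing the reduction.

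The main obstacle is the dichotomy step: one must confirm that $\sigma_A(v)/\sigma_A$ is genuinely $0$ or $1$ and never an intermediate value. The subtlety is that Steiner trees in block graphs are not unique (as the three Steiner trees on a triangle show), so the argument cannot invoke uniqueness; it must instead rely only on the cut-vertex separation property together with the clique structure of blocks, exhibiting the edge-replacement that forces membership of $v$ either in all minimum trees or in none. Once this is secured, the remaining combinatorial identification above is routine.
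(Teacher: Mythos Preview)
Your approach is exactly the paper's: the paper simply asserts $B_k(v)=N_k(G\setminus v)$ for cut vertices $v$ and then restates Theorem~\ref{thm2}, without spelling out the dichotomy argument you supply. One small refinement: in the case $A\subseteq G_j$, your claim that the tree-neighbours $x,y$ of $v$ lie in the same block needs the preliminary observation that no branch of the Steiner tree at $v$ can lie outside $G_j$ (otherwise prune that branch, contradicting minimality); once all neighbours of $v$ in the tree are forced into $G_j$, they are automatically in the unique block joining $v$ to $G_j$, and your edge-replacement (extended in the obvious way if $\deg_T(v)>2$) finishes the job.
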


Note that in \cite{kovse2016} it has been shown that the equality $SW_k(G) = \sum_{v\in V(G)}B_k(v) + (k-1)\binom{n}{k}$ holds for any graph $G$.

\begin{corollary}\label{cor:universalvertex}
Let $G$ be a star-like block graph on $n$ vertices and blocks $B_1, B_2, \ldots , B_t$ of orders $b_1, b_2, \ldots, b_t$ and $k$ an integer such that $2 \leq k \leq n$. Then 
\begin{equation*}
SW_k(G)  =  (n-1)\binom{n-1}{k-1}- \sum_{i=1}^t\binom{b_i - 1}{k}.
\end{equation*}
\end{corollary}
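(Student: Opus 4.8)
The plan is to apply the vertex decomposition formula of Theorem \ref{thm2}, after first pinning down the structure of a star-like block graph. I would begin by observing that the universal vertex $v$ is the unique cut vertex of $G$: since any two neighbours of $v$ together with $v$ span a triangle, they must lie in a common block with $v$, so every block is a clique containing $v$, the blocks pairwise meet only in $v$, and every vertex other than $v$ is pendant. Hence (assuming $t\geq 2$) we have $V_c(G)=\{v\}$, and $G\setminus v$ is the disjoint union of the cliques $K_{b_1-1},\ldots,K_{b_t-1}$, with $\sum_{i=1}^t(b_i-1)=n-1$. The case $t=1$ is the complete graph $K_n$, for which $SW_k(K_n)=(k-1)\binom{n}{k}$ may be used directly together with the identity derived below. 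With the unique cut vertex in hand, Theorem \ref{thm2} collapses to
$$
SW_k(G) = N_k(G\setminus v) + (k-1)\binom{n}{k}.
$$

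Next I would evaluate $N_k(G\setminus v)$ via Vandermonde's convolution. Summing the products $\prod_{i=1}^t\binom{b_i-1}{l_i}$ over \emph{all} compositions $l_1+\cdots+l_t=k$ with $0\leq l_i$, i.e.\ without the restriction that at least two parts be nonzero, gives $\binom{\sum_i(b_i-1)}{k}=\binom{n-1}{k}$. The terms excluded by the definition of $N_k$ are precisely those in which a single $l_j=k$ while all other parts vanish, each contributing $\binom{b_j-1}{k}$; therefore
$$
N_k(G\setminus v) = \binom{n-1}{k} - \sum_{i=1}^t\binom{b_i-1}{k}.
$$

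It then remains to combine the two displays and simplify. Substituting, the $\sum_i\binom{b_i-1}{k}$ term is carried along unchanged, while the remaining terms collapse through Pascal's rule $\binom{n}{k}=\binom{n-1}{k}+\binom{n-1}{k-1}$ and the absorption identity $k\binom{n-1}{k}=(n-k)\binom{n-1}{k-1}$:
$$
\binom{n-1}{k} + (k-1)\binom{n}{k} = k\binom{n-1}{k} + (k-1)\binom{n-1}{k-1} = (n-1)\binom{n-1}{k-1},
$$
which yields the asserted formula. None of the steps is genuinely hard; the only point requiring care is the structural observation that $v$ is the sole cut vertex and that the components of $G\setminus v$ are cliques $K_{b_i-1}$, since this is exactly what lets the general quantity $N_k$ be reduced through Vandermonde's identity. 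The concluding simplification is a routine binomial-coefficient manipulation.
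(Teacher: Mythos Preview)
Your proof is correct and follows essentially the same route as the paper: both invoke the vertex decomposition formula (Theorem~\ref{thm2}, equivalently Theorem~\ref{sbetweenness}) at the unique cut vertex $v$, obtain $N_k(G\setminus v)=B_k(v)=\binom{n-1}{k}-\sum_i\binom{b_i-1}{k}$, and then simplify $\binom{n-1}{k}+(k-1)\binom{n}{k}$ to $(n-1)\binom{n-1}{k-1}$ via standard binomial identities. The only cosmetic difference is that you reach the value of $N_k(G\setminus v)$ through Vandermonde's convolution, whereas the paper counts directly the $k$-subsets for which $v$ is not an inner vertex; your explicit handling of the degenerate case $t=1$ is a small extra bit of care the paper omits.
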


\begin{proof}
Let $v$ be the universal vertex of $G$. Hence $v$ is the only cut vertex of $G$. Then $v$ is not an inner vertex of a Steiner tree connecting vertices of $S$ if and only if $S \subseteq B_i \setminus \{v\}$ for some $i \in \{1, \ldots, t\}$, hence $B_k(v)=\binom{n-1}{k} - \sum_{i=1}^t\binom{b_i - 1}{k}$ and by Theorem \label{sbetweenness}
\begin{align*}
SW_k(G)  &= B_k(v) + (k-1)\binom{n}{k}\\
&=  \binom{n-1}{k} - \sum_{i=1}^t\binom{b_i - 1}{k} + (k-1)\binom{n}{k}.
\end{align*}
From basic binomial identity $\binom{n}{k}= \binom{n-1}{k-1} + \binom{n-1}{k}$ it follows that
$\binom{n-1}{k}+ (k-1)\binom{n}{k} =  \binom{n}{k} - \binom{n-1}{k-1} + (k-1)\binom{n}{k}  = k \binom{n}{k} - \binom{n-1}{k-1} = n \binom{n-1}{k-1} - \binom{n-1}{k-1} = (n-1)\binom{n-1}{k-1}$, hence the formula follows.

\end{proof}

\begin{corollary}\label{cor:windmill}
Let $k$ be an integer such that $2 \leq k \leq n$. For the windmill graph $Wd(r,t)$,
\begin{equation*}
SW_k(Wd(r,t)) =  (n-1)\binom{n-1}{k-1}- t\binom{r-1}{k}.
\end{equation*}
\end{corollary}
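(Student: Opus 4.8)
The plan is to recognize $Wd(r,t)$ as a special case of a star-like block graph and then invoke Corollary \ref{cor:universalvertex} directly. By construction $Wd(r,t)$ is obtained by gluing $t$ copies of $K_r$ at a single shared vertex $v$; this vertex lies in every copy and is therefore adjacent to all $t(r-1)$ remaining vertices, so $v$ is a universal vertex and $Wd(r,t)$ is a star-like block graph. Its blocks are precisely the $t$ copies of $K_r$, each of order $b_i = r$, and the vertex count is $n = t(r-1) + 1$ (this value is not actually needed for the substitution, since the target formula is already expressed in terms of $n$).

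With this identification, I would apply Corollary \ref{cor:universalvertex} and specialize to the uniform-block case. Substituting $b_i = r$ for every $i \in \{1, \ldots, t\}$ into $SW_k(G) = (n-1)\binom{n-1}{k-1} - \sum_{i=1}^t \binom{b_i - 1}{k}$ collapses the sum, since each of the $t$ summands equals $\binom{r-1}{k}$. Hence $\sum_{i=1}^t \binom{b_i - 1}{k} = t\binom{r-1}{k}$, which yields $SW_k(Wd(r,t)) = (n-1)\binom{n-1}{k-1} - t\binom{r-1}{k}$, exactly the claimed expression.

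The only point requiring (minimal) care is confirming that the hypotheses of Corollary \ref{cor:universalvertex} are genuinely met, namely that $v$ is universal and that all blocks share the common order $r$. Once this is checked, the result is immediate, so I do not anticipate any substantive obstacle: the corollary is simply the specialization of the preceding one to the situation of $t$ equal blocks.
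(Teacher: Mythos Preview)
Your proposal is correct and matches the paper's approach exactly: the paper states this corollary without proof, treating it as the immediate specialization of Corollary~\ref{cor:universalvertex} to the case $b_1 = b_2 = \cdots = b_t = r$, which is precisely what you do.
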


Note that for $r=2$ the formula from Corollary \ref{cor:windmill} gives formula $SW_k(S_n) =  (n-1)\binom{n-1}{k-1}$ for a star $S_n$ on $n$ vertices, already noted in \cite{gutman2016discuss}.




\begin{corollary}\label{cor:clawfreeformula}
Let $G$ be a path-like block graph on $n$ vertices, with blocks $B_1, B_2, \ldots , B_t$ of orders $b_1, b_2, \ldots, b_t$ ordered from one pendant block to the other, and $k$ an integer such that $2 \leq k \leq n-1$. Then
 
$$SW_k(G) = (k-1)\binom{n}{k} + (t-1){n-1\choose k} -  \left( \sum_{i=1}^{t-1} \binom{(\sum_{j=1}^{i} b_j) - i}{k} + \sum_{i=1}^{t-1} \binom{n + i - \sum_{j=1}^{i} b_{j}}{k} \right).$$
\end{corollary}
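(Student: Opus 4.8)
The plan is to apply the vertex-decomposition formula of Theorem \ref{thm2}, namely $SW_k(G) = \sum_{v \in V_c(G)} N_k(G \setminus v) + (k-1)\binom{n}{k}$, and to evaluate each summand explicitly from the linear structure of a path-like block graph. Since $G$ is the line graph of a caterpillar, every cut vertex lies in exactly two blocks, so the cut vertices are precisely the $t-1$ vertices $c_1, \ldots, c_{t-1}$, where $c_i$ is the unique vertex shared by the consecutive blocks $B_i$ and $B_{i+1}$. The first move is therefore to rewrite the sum in Theorem \ref{thm2} as $\sum_{i=1}^{t-1} N_k(G \setminus c_i) + (k-1)\binom{n}{k}$, so that everything reduces to computing a single $N_k(G \setminus c_i)$.

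The key step is to evaluate $N_k(G \setminus c_i)$. Because $c_i$ lies in exactly two blocks, $G \setminus c_i$ has exactly two connected components: a ``left'' component $L$ formed by the blocks $B_1, \ldots, B_i$ (minus $c_i$) and a ``right'' component $R$ formed by $B_{i+1}, \ldots, B_t$ (minus $c_i$). Here I would carefully count vertices: consecutive blocks overlap in a single cut vertex and $n = \sum_{j=1}^t b_j - (t-1)$, so the union $B_1 \cup \cdots \cup B_i$ contains $\sum_{j=1}^i b_j - (i-1)$ vertices, and deleting $c_i$ leaves $|L| = \sum_{j=1}^i b_j - i$; the right component then has $|R| = (n-1) - |L| = n - 1 - \bigl(\sum_{j=1}^i b_j - i\bigr)$ vertices. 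Since there are only two components, $N_k(G \setminus c_i)$ counts the $k$-subsets of the $n-1$ vertices of $G \setminus c_i$ that meet both components. Applying Vandermonde's identity to the definition of $N_k$ gives the clean formula $N_k(H) = \binom{\sum_j n(H_j)}{k} - \sum_j \binom{n(H_j)}{k}$ for a graph $H$ with components $H_j$, so in our two-component case $N_k(G \setminus c_i) = \binom{n-1}{k} - \binom{|L|}{k} - \binom{|R|}{k}$.

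The final step is purely bookkeeping: sum over $i = 1, \ldots, t-1$ and collect terms. The constant contributions give $(t-1)\binom{n-1}{k}$; the left-component binomials assemble into $\sum_{i=1}^{t-1}\binom{\sum_{j=1}^i b_j - i}{k}$; and the right-component binomials $\binom{|R|}{k}$ assemble into the companion sum $\sum_{i=1}^{t-1}\binom{\,n - 1 - (\sum_{j=1}^i b_j - i)\,}{k}$. Adding the term $(k-1)\binom{n}{k}$ supplied by Theorem \ref{thm2} then yields the asserted identity.

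I expect the main obstacle to be the vertex count for $|L|$ and $|R|$: one must subtract each shared cut vertex exactly once when sizing the left part and then remove $c_i$ itself, and it is easy to be off by one in the resulting argument of the right-component binomial. I would guard against this by checking the sanity identity $|L| + |R| + 1 = n$ and by testing small instances, for example the path $P_n$ (all $b_j = 2$, so $|L| = i$ and $|R| = n-1-i$), where the collapsed formula can be compared directly against a hand computation of $SW_k$.
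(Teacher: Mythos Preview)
Your approach is exactly the paper's: it too labels the cut vertices $v_i\in B_i\cap B_{i+1}$, notes that $v_i$ fails to be an inner vertex of a Steiner tree on $S$ precisely when $S$ lies entirely on one side of $v_i$, and writes $B_k(v_i)=N_k(G\setminus v_i)=\binom{n-1}{k}-\binom{|L|}{k}-\binom{|R|}{k}$ before summing over $i$. One remark worth recording: your computation gives $|R|=n+i-1-\sum_{j\le i}b_j$, which passes your own sanity check $|L|+|R|+1=n$, whereas the displayed formula in the corollary has $n+i-\sum_{j\le i}b_j$; your value is the correct one (run your $P_n$ test with $k=2$), so the mismatch is a typographical slip in the statement rather than a flaw in your argument.
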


\begin{proof}
Let $v_1, v_2, \ldots, v_{t-1}$ denote cut vertices of $G$, where $v_i \in B_i \cap B_{i+1}$ for $i \in \{1, \ldots, t-1 \}$. Then $v_i$ is not an inner vertex of a Steiner tree connecting vertices of $S$ if and only if $S \subseteq B_1 \cup \ldots \cup B_i \setminus \{v_i)\}$ or $S \subseteq B_{i+1} \cup \ldots \cup B_t \setminus \{v_i)\}$. Hence

\begin{align*}
B_k(v_i) &= \binom{n-1}{k} - \binom{(\sum_{j=1}^{i} b_j) - i}{k} - \binom{n + i - \sum_{j=1}^{i} b_{j}}{k},
\end{align*}
and the formula follows.
\end{proof}

Clearly $SW_{n}(G)=n-1$ for any graph of order $n$. For $k = n - 1$ we can extend the Theorem on trees from \cite{gutman2016discuss} as follows.
\begin{theorem}
Let $G$ be a block graph of order $n$ with $p$ pendant vertices. Then 
 $$SW_{n-1}(G)= n^2 - n-p.$$ 
\end{theorem}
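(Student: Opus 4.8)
The plan is to exploit the fact that the $\binom{n}{n-1}=n$ subsets $S\subseteq V(G)$ with $|S|=n-1$ are exactly the complements of single vertices, so that
\[
SW_{n-1}(G)=\sum_{v\in V(G)} d\bigl(V(G)\setminus\{v\}\bigr).
\]
First I would recall from the preliminaries that every vertex of a block graph is either a pendant vertex or a cut vertex, so this sum splits into a contribution from the $p$ pendant vertices and one from the $n-p$ cut vertices. It then suffices to evaluate $d(V(G)\setminus\{v\})$ in each of the two cases.

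For a pendant vertex $v$, I would argue that $v$ is not a cut vertex, hence $G\setminus v$ is connected and has $n-1$ vertices. Any connected subgraph containing the $n-1$ vertices of $V(G)\setminus\{v\}$ has at least $n-2$ edges, and a spanning tree of $G\setminus v$ attains this bound while containing all of $S$; since a pendant vertex is never needed as an inner vertex of a Steiner tree, there is no advantage in reinserting $v$, and therefore $d\bigl(V(G)\setminus\{v\}\bigr)=n-2$.

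For a cut vertex $v$, removing $v$ breaks $G$ into at least two components, and $S=V(G)\setminus\{v\}$ meets every one of them. Consequently any connected subgraph containing $S$ must also contain $v$ in order to join these components, so it spans all $n$ vertices and uses at least $n-1$ edges; a spanning tree of $G$ realizes this, giving $d\bigl(V(G)\setminus\{v\}\bigr)=n-1$.

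Combining the two cases yields
\[
SW_{n-1}(G)=p(n-2)+(n-p)(n-1),
\]
and expanding this expression collapses the $pn$ terms and leaves $n^2-n-p$, as claimed. I do not expect a genuine obstacle here; the only point requiring care is the clean dichotomy in the Steiner distance — it equals $n-2$ precisely when the deleted vertex is pendant (so it need not be reinserted as a Steiner point) and $n-1$ when it is a cut vertex (so it is forced back in as an inner vertex) — after which the result is a one-line computation.
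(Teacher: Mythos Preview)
Your proposal is correct and follows essentially the same argument as the paper: both split the sum $\sum_{v}d(V(G)\setminus\{v\})$ according to whether $v$ is pendant or a cut vertex, obtain $d(S)=n-2$ and $d(S)=n-1$ in the two cases via spanning trees of $G\setminus v$ and $G$ respectively, and then compute $p(n-2)+(n-p)(n-1)=n^2-n-p$. Your write-up is in fact slightly more explicit than the paper's in justifying why the cut vertex must reappear in any Steiner tree for $S$.
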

\begin{proof}

If $k=n-1,$ then subset $S$ contains all but one vertex from $G.$ If the vertex missing from $S$ is pendant, then the vertices contained in $S$ form a connected subgraph and its spanning tree of order $n-1$ provides the Steiner tree for $S$. Therefore $d(S)=n-2.$ There are $p$ such subsets, contributing to $SW_{n-1}$ by $p(n-2).$

If the vertex not present in $S$ is a cut vertex of $G$ and the respective Steiner tree must contain all vertices of $G.$ Therefore a spanning tree of order $n$ provides the Steiner tree for $S$ and $d(S)=n-1.$ There are $n-p$ such subsets, contributing to $SW_{n-1}$ by $(n-p)(n-1)$. Thus $SW_{n-1}(G)= p(n-2)+(n-p)(n-1)$, which proves the theorem.
\end{proof}

\section{Extremal values of Steiner $k$-Wiener index and GBS poset of block graphs}
\label{section:extremes}

 Among all trees on $n$ vertices star has the minimal Wiener index and path has the largest Wiener index. More generally in extremal problems concerning trees on $n$ vertices it turns out that the maximal (minimal) value of the examined parameter is attained at the star and the minimal (maximal) value is attained at the path. While often it is not hard to prove the extremality of the star, it turns out that to prove the extremality of the path usually requires some effort. 
 
Let $\mathcal{G}(b_1, b_2, \ldots , b_t)$ denote
the set of all connected block graphs with block order sequence $b_1 \geq b_2 \geq  \ldots \geq b_t$.  We will demonstrate that similar situation, as in the case of the extremality of the Wiener index over trees, happens also for the problem of the extremal values of the Steiner $k$-Wiener over $\mathcal{G}(b_1, b_2, \ldots , b_t)$.  If $t \leq 2$ then $\mathcal{G}(b_1, b_2, \ldots , b_t)$ consist of a unique block graph, hence in what follows we always assume that $t \geq 3$.

\begin{theorem}\label{minmax}
Let $G \in \mathcal{G}(b_1, b_2, \ldots , b_t)$  and $2 \leq k < n- 1$. Then
\begin{equation}
\label{eq:bounds}
(n-1)\binom{n-1}{k-1}- \sum_{i=1}^t\binom{b_i - 1}{k}  \leq SW_k(G).  
\end{equation}
 The equality is attained for the star-like block graph.
\end{theorem}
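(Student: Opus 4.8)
The plan is to derive a single closed expression for $SW_k(G)$ that is additive over the blocks and then to minimize it block-by-block via a convexity argument; the minimum will fall out exactly at the star-like block graph. The engine is the Hamming-labelling identity $d(S)=\sum_{i=1}^{t}\ell_i(S)-t$ proved above. Its combinatorial content is that deleting the edges of a block $B_i$ splits $G$ into exactly $b_i$ components, one reached through each vertex of $B_i$, and the $i$-th coordinate records which component a vertex lies in. Writing $c^{(i)}_1,\dots,c^{(i)}_{b_i}$ for the sizes of these components --- each at least $1$ and summing to $n$ --- the quantity $\ell_i(S)$ is precisely the number of these classes met by $S$.

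First I would sum $d(S)$ over all $k$-subsets $S$. Interchanging summations and using that a class of size $c$ is met by exactly $\binom{n}{k}-\binom{n-c}{k}$ of the $k$-subsets gives $\sum_{|S|=k}\ell_i(S)=b_i\binom{n}{k}-\sum_{j=1}^{b_i}\binom{n-c^{(i)}_j}{k}$. Since every connected block graph satisfies $\sum_{i=1}^t(b_i-1)=n-1$, so that $n$ is constant over $\mathcal{G}(b_1,\dots,b_t)$, everything collapses to the key identity
$$SW_k(G)=(n-1)\binom{n}{k}-\sum_{i=1}^{t}\sum_{j=1}^{b_i}\binom{n-c^{(i)}_j}{k},$$
whence minimizing $SW_k(G)$ is equivalent to maximizing the double sum on the right.

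The inequality then reduces to a per-block optimization. For fixed $i$ the map $c\mapsto\binom{n-c}{k}$ is discretely convex (its second difference equals $\binom{n-c-2}{k-2}\ge 0$), so $\sum_j\binom{n-c^{(i)}_j}{k}$ is Schur-convex in the composition $(c^{(i)}_1,\dots,c^{(i)}_{b_i})$ of $n$ into $b_i$ positive parts and is maximized by the majorization-largest composition $(n-b_i+1,1,\dots,1)$, with value $\binom{b_i-1}{k}+(b_i-1)\binom{n-1}{k}$; if one prefers to avoid the Schur-convexity black box, the same bound follows from an elementary smoothing argument that repeatedly shifts one unit from a part of size $\ge 2$ onto the largest part. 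Summing over $i$ and simplifying via $\binom{n}{k}-\binom{n-1}{k}=\binom{n-1}{k-1}$ produces exactly $(n-1)\binom{n-1}{k-1}-\sum_{i=1}^{t}\binom{b_i-1}{k}$.

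Finally, for the equality claim I would note that the star-like block graph realizes the extreme composition $(n-b_i+1,1,\dots,1)$ for every block simultaneously, since deleting a block's edges isolates its $b_i-1$ pendant vertices while leaving the universal vertex's component intact; this also reproves Corollary~\ref{cor:universalvertex}. The step I expect to be the crux is exactly this simultaneity: the convexity estimate bounds each block's contribution in isolation, and the content of the theorem is that a \emph{single} graph attains all $t$ per-block maxima at once, so no slack accumulates. By contrast, the opposite maximizing direction does not localize to individual blocks, which is what makes it the genuinely hard case requiring the generalized block shift.
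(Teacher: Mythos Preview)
Your argument is correct, and it is a genuinely different route from the paper's. The paper's own proof of Theorem~\ref{minmax} is essentially a heuristic (``$d(S)\ge k-1$, only cut vertices can be inner vertices, hence the minimum is attained when there is a single cut vertex''), with the missing monotonicity step supplied afterwards by the generalized block shift: one shows that every proper GBS weakly decreases $SW_k$ (Theorem~\ref{thm:decrease}), and since the star-like block graph is the unique maximal element of the GBS poset, it must minimize $SW_k$ (Corollary~\ref{cor:minmax}). Your proof bypasses the poset entirely: you turn the Hamming-labelling identity into the closed form $SW_k(G)=(n-1)\binom{n}{k}-\sum_{i}\sum_{j}\binom{n-c^{(i)}_j}{k}$ and then maximize each inner sum by discrete convexity, noting that the extremal compositions $(n-b_i+1,1,\dots,1)$ are all realized simultaneously by the star-like graph.

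What each approach buys: your convexity argument is shorter, fully self-contained, and yields Corollary~\ref{cor:universalvertex} for free as the equality case. The paper's GBS machinery is heavier for the lower bound alone, but it gives strictly more: a levelled poset on all of $\mathcal{G}(b_1,\dots,b_t)$ along which $SW_k$ is monotone, which is exactly the structure needed to localize the \emph{maximizers} to path-like block graphs. As you correctly anticipate in your final paragraph, the per-block convexity bound in the opposite direction (the majorization-smallest composition) cannot be attained simultaneously across blocks, so your method does not extend to the upper bound, whereas the GBS does.
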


\begin{proof} Note that $d(S) \geq k-1$ for any subset $S$ of $k$ vertices, with equality if and only if Steiner tree connecting $S$ has no inner vertices. Note that the only vertices that can appear as inner vertices of some Steiner trees in a block graph are cut vertices. Hence $SW_k(G)$ will attain the minimum if and only if $G$ has only one cut vertex, which is then a universal vertex of $G$. Therefore we can use Corollary \ref{cor:universalvertex} and the lower bound is proved.
\end{proof}

In \cite{csikvari2010poset} Csikv{\'a}ri introduced a graph transformation called generalized tree shift and defined a partially ordered set (poset) on the set of unlabelled trees with $n$ vertices. The minimal element of this level poset is the path, and the maximal element is the star. Among other results, he showed that going up on this poset decreases the Wiener index. 

Next we introduce a graph transformation called a generalized block shift which defines a poset on $\mathcal{G}(b_1, b_2, \ldots , b_t)$. We will show that going up on this poset does not increase the Steiner $k$-Wiener index, which will help us to describe block graphs with minimal and maximal Steiner $k$-Wiener index among all block graphs from $\mathcal{G}(b_1, b_2, \ldots , b_t)$.


Let $G_2 \in \mathcal{G}(b_1, b_2, \ldots , b_t)$ and $x,y \in V(G_2)$, such that all blocks having nonempty intersection with the path between $x$ and $y$ (if they exist) have exactly two cut vertices. The generalized block shift (GBS) of $G_2$ is the block graph $G_1$ obtained from $G_2$ as follows: let $z$ be the neighbor of $y$ lying on the path between $x$ and $y$, we erase all the edges
between $y$ and $N(y)\setminus \{z\}$ and add the edges between $x$ and $N(y)\setminus \{z\}$, see
Figure \ref{fig3}. Note that $G_1 \in \mathcal{G}(b_1, b_2, \ldots , b_t)$.

\begin{center}
\begin{figure}[h!]
\centering
  \includegraphics[width= 1.2\textwidth]{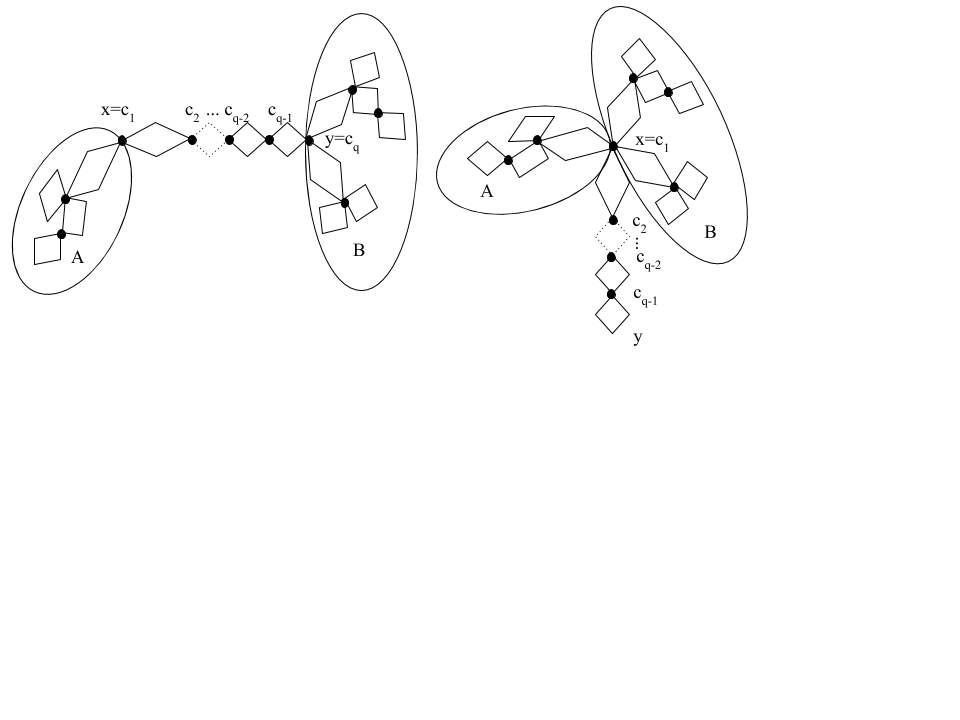} \vspace{-75mm}
  \caption{Proper generalized block shift transforming block graph $G_2$ on the left to the block graph $G_1$ on the right. Parallelograms denote blocks of $G_1$ and $G_2$, vertices $x=c_1,c_2, \ldots, c_q$ denote cut vertices.}
  \label{fig3}
\end{figure}
\end{center}
\vspace{-10mm}

We denote the cut vertices on the path between $x$ and $y$ of length $q$ by $c_1, c_2, \ldots ,c_q$, where $x=c_1$ and $y=c_q$. The set $A \subseteq V (G_2)$ consists of the vertices which can be reached with
a path from $c_q$ only through $c_1$, and similarly the set $B \subseteq V (G_2)$ consists of
those vertices which can be reached with a path from $c_1$ only through $c_q$. For
the sake of simplicity we denote the corresponding sets in $G_1$ also with $A$ and $B$. Furthermore let $A_0= N(x) \cap A$ in both $G_1$ and $G_2$, and let $B_0= N(x) \cap B$ in $G_1$ and $B_0= N(y) \cap B$ in $G_2$.

As in \cite{csikvari2010poset} we call $x$ the beneficiary and $y$ the candidate (for being a leaf) of the generalized block shift. Note that if $x$ or $y$ is a leaf in $G_2$, then $G_1$ and $G_2$ have the same number of pendant blocks, otherwise the number of pendant blocks in $G_1$ equals the number of pendant blocks in $G_2$ plus 1. In the latter case we call the generalized block shift proper.

Let $G_1, G_2 \in \mathcal{G}(b_1, b_2, \ldots , b_t)$. We denote by $G_1 > G_2$ if $G_1$ can be obtained from $G_2$ by some proper generalized block shift. The relation $>$ induces a poset on $\mathcal{G}(b_1, b_2, \ldots , b_t)$, which we call GBS poset.

We can always apply a proper generalized block shift to any block graph which has
at least one non pendant block. Therefore the only maximal
element of GBS poset is the (unique) star-like block graph from $\mathcal{G}(b_1, b_2, \ldots , b_t)$. The following theorem shows that
the minimal elements of the induced poset are path-like block graphs.

\begin{theorem}\label{thmGBS}
Every block graph from  $\mathcal{G}(b_1, b_2, \ldots , b_t)$ that is not a path-like block graph is the image of some proper generalized block shift.
\end{theorem}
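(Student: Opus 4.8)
The plan is to prove the statement constructively by \emph{reversing} a generalized block shift: given $G_1 \in \mathcal{G}(b_1,\ldots,b_t)$ that is not path-like, I will produce a block graph $G_2$ together with a proper GBS carrying $G_2$ to $G_1$. Concretely, in $G_1$ I single out a future leaf $y$ (the candidate) and a beneficiary $x$, detach a branch currently hanging at $x$ and reattach it at $y$ to obtain $G_2$, and then check that the forward shift with beneficiary $x$ and candidate $y$ returns $G_1$. Since a GBS only reattaches whole blocks and never changes their orders, $G_2$ automatically lies in $\mathcal{G}(b_1,\ldots,b_t)$.

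First I would locate the relevant vertices using the block--cut tree $T$ of $G_1$. Because $G_1$ is not path-like it has at least three pendant blocks, so $T$ has at least three leaves and is therefore not a path; hence $T$ has a node $\nu$ of degree at least three. Choosing $\nu$ as the node adjacent to an endpoint of a longest path in $T$ (after suppressing degree-two nodes) guarantees that at least two of the branches leaving $\nu$ are ``threads'' terminating in pendant blocks, every internal node of which has degree two in $T$. Call two such pendant blocks $P_a$ and $P_c$ and pick a pendant vertex $y$ of $P_a$ as the candidate. The beneficiary $x$ is taken to be $\nu$ itself when $\nu$ is a cut vertex, and the cut vertex of $\nu$ lying on the $P_c$-thread when $\nu$ is a block.

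Next I would build $G_2$ by detaching the entire $P_c$-thread from its attachment point and reattaching it at $y$, i.e.\ replacing the attachment vertex by $y$ in the first block of that thread. I then claim that the forward GBS with beneficiary $x$ and candidate $y$ recovers $G_1$: it erases the edges from $y$ to $N(y)$ minus the $P_a$-side neighbour and adds them at $x$, thereby returning the $P_c$-thread to $x$ and leaving $y$ as a pendant vertex of $P_a$. Properness (and the increase of the pendant-block count by exactly one) follows because in $G_2$ the vertex $y$ lies in two blocks, its $P_a$-block and the reattached thread-block, and $x$ still lies in at least two blocks, so neither $x$ nor $y$ is a leaf of $G_2$.

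The hard part will be verifying the corridor hypothesis of the shift, namely that every block met by the $x$--$y$ path has exactly two cut vertices in $G_2$. By the choice of $\nu$ the $P_a$-thread is clean, so all its internal blocks already have two cut vertices and the reattachment does not disturb them; when $\nu$ is a cut vertex the corridor runs only along the $P_a$-thread and the condition is immediate. The genuinely delicate case is when $\nu$ is a block $B$: the corridor then traverses $B$, and detaching the $P_c$-thread precisely strips the beneficiary of its cut-vertex status in $B$, so I must argue that $B$ is left with exactly two cut vertices in $G_2$. This cut-vertex bookkeeping at the branching block is the crux of the argument, and is where the choice of \emph{which} pendant thread at $B$ to relocate (and, when $B$ carries several such threads, how to reduce its cut-vertex count to two) must be made with care.
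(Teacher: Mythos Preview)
Your Case 1 (the branching node $\nu$ is a cut vertex) is essentially the paper's argument. The paper does not pass through the block--cut tree at all: it simply picks any leaf $v$ of $G$, walks to the nearest cut vertex $w$ lying in at least three blocks, splits $N(w)$ into the block $A_0$ pointing towards $v$ and the rest $B_0$, and forms $G'$ by erasing the $w$--$B_0$ edges and adding $v$--$B_0$ edges. The forward shift with beneficiary $w$ and candidate $v$ then returns $G$, and properness is immediate because both $A_0$ and $B_0$ are nonempty. The only difference from your Case~1 is cosmetic: the paper relocates \emph{all} non-corridor blocks at $w$, while you relocate a single thread; either choice works once $\nu$ is a cut vertex.

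Your Case 2, however, contains a real error. When $\nu$ is a block $B$ and you take $x$ to be the cut vertex of $B$ at which the $P_c$-thread is attached, that vertex lies in exactly two blocks of $G_1$: the block $B$ itself and the first block of the $P_c$-thread (this is precisely the ``thread'' condition). Once you detach the $P_c$-thread from $x$ and reattach it at $y$, the vertex $x$ remains in $B$ alone and is therefore a pendant vertex of $G_2$. Thus $x$ is a leaf in $G_2$ and the shift with beneficiary $x$ is \emph{not} proper, contradicting your assertion that ``$x$ still lies in at least two blocks''. Moreover, relocating a single thread lowers the cut-vertex count of $B$ by exactly one, so if $B$ had four or more cut vertices in $G_1$ the corridor hypothesis still fails in $G_2$; and you cannot repair this by moving several threads simultaneously, because they hang at \emph{different} vertices of $B$, whereas a single generalized block shift can only transfer blocks incident to one vertex. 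The paper sidesteps the whole difficulty by declaring at the outset that a non-path-like block graph has a cut vertex contained in at least three blocks and then working exclusively with that cut vertex as beneficiary---i.e.\ it stays entirely within your Case~1. You should examine whether that reduction to Case~1 is actually available to you before investing more in Case~2.
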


\begin{proof}   
 Let $G$ be a block graph from $\mathcal{G}(b_1, b_2, \ldots , b_t)$ that is not path-like, i. e., it has at least one cut vertex belonging to at least 3 blocks. Let $v$ be a leaf of $G$ and $w$ the closest cut vertex to $v$ which belongs to at least 3 blocks. Then the cut vertices (if they exist) on the path between $v$ and $w$ belong to exactly 2 blocks. Vertex $w$ belongs to at least two blocks different from the one which
has a nonempty intersection with the path between $v$ and $w$, so we can split $N(w)$
into two nonempty sets $A_0$ and $B_0$, where vertices of $A_0$ consist of those that form a block which has nonempty intersection with the path between $v$ and $w$. Let $G'$ be the block graph obtained by erasing the
edges between $w$ and $B_0$ and adding the edges between $v$ and $B_0$. Then $G$ can be obtained from $G'$ by a generalized block shift, where $w$ is the beneficiary and $v$ is the
candidate. Since both $A_0$ and $B_0$ are nonempty this is a proper generalized block shift.
\end{proof}

\begin{corollary}\label{corGBS}
The star-like block graph is the unique maximal element of the GBS poset on $\mathcal{G}(b_1, b_2, \ldots , b_t)$. The set of minimal elements consist of all path-like block graphs.
\end{corollary}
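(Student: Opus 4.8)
The plan is to assemble Corollary~\ref{corGBS} from the two facts already established: the remark preceding Theorem~\ref{thmGBS}, which produces the maximal element, and Theorem~\ref{thmGBS} together with a short counting argument, which pins down the minimal ones. Throughout, recall that in the GBS poset a graph is \emph{maximal} exactly when it is not the source $G_2$ of any proper generalized block shift, and \emph{minimal} exactly when it is not the image $G_1$ of any such shift.

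For the maximal element I would start from the observation, recorded just before Theorem~\ref{thmGBS}, that a proper generalized block shift can be applied to any $G \in \mathcal{G}(b_1,\ldots,b_t)$ possessing a non-pendant block; such a $G$ is then a source and hence not maximal. It remains to identify the graphs in which every block is pendant, i.e.\ carries a single cut vertex. A short connectivity argument does this: fix one block $B$, with its unique cut vertex $c$; every block meeting $B$ must meet it at $c$, and since each of those blocks again has only one cut vertex, that cut vertex must also be $c$. Propagating outward through the connected block graph forces $c$ to lie in every block, so $c$ is adjacent to all other vertices and is universal, and $G$ is the star-like block graph, the unique member of $\mathcal{G}(b_1,\ldots,b_t)$ with a universal vertex. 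Conversely, in the star-like block graph $c$ is the only cut vertex, so no block contains two cut vertices and no path between a prospective beneficiary and candidate can run through such a block; hence no proper shift applies and the star-like graph is maximal. This yields the unique maximal element.

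For the minimal elements, Theorem~\ref{thmGBS} already gives one inclusion: every non-path-like graph is the image of a proper shift, hence sits strictly above its source and is not minimal, so every minimal element is path-like. For the reverse inclusion I would use the bookkeeping from the definition of the shift, namely that a proper shift raises the number of pendant blocks by exactly one. A path-like block graph has precisely two pendant blocks; were it the image $G_1$ of a proper shift, its source $G_2$ would have a single pendant block, contradicting the fact noted in the introduction that every connected block graph other than a complete graph has at least two pendant blocks (and with $t \geq 3$ the source is not complete). Hence no path-like block graph is an image, and each is minimal.

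The step demanding the most care is the maximal side, specifically the two converse checks hidden in it: that the presence of a block with two cut vertices really furnishes an admissible beneficiary--candidate pair for a \emph{proper} shift, so that every non-star graph is genuinely non-maximal, and that the star-like graph admits no such pair at all. Both hinge on the clean dichotomy between a block graph having one universal cut vertex and having some block strung between two distinct cut vertices; once that dichotomy is secured, the counting on the minimal side is routine.
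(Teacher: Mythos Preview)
Your proposal is correct and follows the same approach as the paper, which treats the corollary as immediate from the remark preceding Theorem~\ref{thmGBS} (for the maximal element) together with Theorem~\ref{thmGBS} itself (for the minimal elements). Where the paper is terse, you supply the explicit details the paper omits: the connectivity argument showing that ``all blocks pendant'' forces a universal vertex, and the pendant-block count (a proper shift raises the number of pendant blocks by one, so a path-like graph with exactly two pendant blocks cannot be an image) establishing that path-like graphs are genuinely minimal rather than merely among the candidates.
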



\begin{theorem}
\label{thm:decrease}
A proper generalized block shift decreases the value of the Steiner $k$-Wiener index if and only if $|A \cup B| \geq k$, otherwise the value remains the same.
\end{theorem}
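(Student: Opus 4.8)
The plan is to prove the statement by computing the difference $\Delta_k := SW_k(G_2)-SW_k(G_1)$ explicitly and reading off its sign. Since $G_1$ and $G_2$ have the same order $n$ and the same block order sequence, I would invoke the vertex decomposition of Theorem~\ref{sbetweenness}: the additive term $(k-1)\binom{n}{k}$ is identical for both graphs, so that $\Delta_k = \sum_{v} B_k^{G_2}(v) - \sum_{v} B_k^{G_1}(v)$, where $B_k^{G}(v)=N_k(G\setminus v)=\binom{n-1}{k}-\sum_{C}\binom{|C|}{k}$ and $C$ ranges over the components of $G\setminus v$. The key structural observation is that a proper generalized block shift only relocates the branch $B\setminus\{y\}$ from $y$ to $x$; hence for every vertex $v$ that does \emph{not} lie on the path $x=c_1,\dots,c_q=y$ the sets $A$ and $B$ stay inside one common component of $G\setminus v$, so its multiset of component sizes, and thus $B_k(v)$, is unchanged. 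Moreover $y$ is a cut vertex of $G_2$ but only a non-cut vertex of its block in $G_1$, so $B_k^{G_1}(y)=0$. This localizes the entire difference to the path:
\[ \Delta_k = B_k^{G_2}(y) + \sum_{i=1}^{q-1}\bigl(B_k^{G_2}(c_i)-B_k^{G_1}(c_i)\bigr). \]

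Next I would evaluate these path terms. Deleting $c_i$ splits $G$ into an $x$-side of size $L_i$, a $y$-side of size $R_i$, and further side pieces that are common to $G_1$ and $G_2$; the shift changes only the two main sides, replacing $(L_i,R_i)$ by $(L_i+\beta,\,R_i-\beta)$ with $\beta=|B|-1$. The contributions of the internal blocks of the branch $B$ (hanging at $y$ in $G_2$, at $x$ in $G_1$) are identical and cancel between the $y$-term and the $c_1=x$-term, leaving $\Delta_k$ as a signed sum of binomial coefficients $\binom{m}{k}$ whose arguments are component sizes of the form $L_i+\beta$, $R_i-\beta$, $L_i$, $R_i$. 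In the bare-path case this telescopes cleanly into
\[ \Delta_k = \sum_{t=1}^{q-1}\Bigl[\binom{t}{k}+\binom{t+|A|+|B|-2}{k}-\binom{t+|A|-1}{k}-\binom{t+|B|-1}{k}\Bigr]. \]

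The engine of the sign analysis is the convexity of $m\mapsto\binom{m}{k}$, whose second difference equals $\binom{m-1}{k-2}\ge 0$ (strict once $m\ge k-1$). Each bracket compares two pairs of integers with the same sum, $\{t,\,t+|A|+|B|-2\}$ against $\{t+|A|-1,\,t+|B|-1\}$, the first majorizing the second because $|A|,|B|\ge 2$ for a proper shift; convexity then forces every bracket to be nonnegative, so $\Delta_k\ge 0$ and the shift never increases $SW_k$. Strict positivity is governed by whether the largest argument that occurs, namely $|A|+|B|+q-3=n-1$, is large enough relative to $k$ for the corresponding binomial to sit inside the strictly convex range rather than in the flat part where $\binom{m}{k}=0$; once $k$ is too large all active binomials collapse and $\Delta_k=0$. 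Matching this strict-positivity regime precisely to the threshold $|A\cup B|\ge k$ asserted in the statement is where the quantitative counting must be carried out with care.

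The main obstacle I anticipate is twofold. First, the general (non-bare) path: interior cut vertices $c_i$ may carry their own side-branches and the path blocks may carry pendant vertices, so $L_i$ grows irregularly and the neat telescoping is not immediate; I would handle this by first verifying that all such side-branches and pendants contribute identically to $G_1$ and $G_2$ (they lie in a component untouched by the relocation of $B$), thereby reducing each path-block comparison to a single \emph{equal-sum, one-pair-majorizes-the-other} instance before summing. Second, and harder, is pinning the exact equality threshold: the convexity argument yields $\Delta_k\ge 0$ essentially for free, but determining the precise value of $k$ at which the majorization ceases to be effective, and identifying it with the condition $|A\cup B|\ge k$, is the delicate step that requires the binomial bookkeeping to be controlled globally rather than bracket by bracket.
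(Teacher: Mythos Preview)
Your plan via the vertex decomposition of Theorem~\ref{sbetweenness} is a genuinely different route from the paper's. The paper never passes through betweenness or convexity: it compares the Steiner distances $d^{1}(S)$ and $d^{2}(S)$ directly, subset by subset, using the reflection $c_i\leftrightarrow c_{q+1-i}$ of the $x$--$y$ path as the organizing device. For $k$-sets meeting only one of $A,B$ (possibly together with path vertices) the reflection pairs sets so that the $G_1$- and $G_2$-contributions cancel; for $k$-sets $S=A_1\cup B_1\subseteq A\cup B$ with both parts nonempty one has $d^{2}(S)=d^{1}(S)+(q-1)$, since in $G_2$ the Steiner tree must traverse the entire path while in $G_1$ both pieces hang at $x$. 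This yields the closed form
\[
SW_k(G_2)-SW_k(G_1)\;=\;(q-1)\sum_{\substack{l_1+l_2=k\\ 0<l_1,l_2<k}}\binom{|A|}{l_1}\binom{|B|}{l_2},
\]
which counts the $k$-subsets of the disjoint union $A\cup B$ meeting both parts; it is positive exactly when $|A\cup B|=|A|+|B|\ge k$, so the threshold drops out for free.

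What each approach buys: the paper's direct comparison produces a closed form for $\Delta_k$ from which both nonnegativity and the exact equality case are immediate. Your convexity/majorization argument gives $\Delta_k\ge 0$ cleanly, and it is conceptually pleasant that the second difference $\binom{m-1}{k-2}$ drives it; but the two obstacles you flag are real. Extracting the precise threshold $|A\cup B|\ge k$ from a bracket-by-bracket analysis is awkward, because individual brackets can vanish well beyond the true threshold and a global argument is required; and the non-bare path forces you to carry the interior pendant vertices of each path block through the telescoping. Both are surmountable, but the paper's route avoids them entirely by never decomposing over vertices in the first place.
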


\begin{proof}
Let $G_2$ be a block graph and $G_1$ its image obtained by a proper generalized block shift, moreover let $ 2 \leq k \leq n-2$, where $n=|V(G_1)|=|V(G_2)|$. Let $d^1$ and $d^2$ denote the Steiner distance in $G_1$ and $G_2$ respectively.
For $i \in \{1,\ldots, q\}$ it follows that
$d^1(\{c_i\} \cup A_1) + d^1\left(\{c_{q + 1 - i}\} \cup A_1\right) = d^2(\{c_i\} \cup A_1) + d^2(\{c_{q + 1 - i} \cup A_1)$ for all $A_1 \subseteq A$ and 
$d^1(\{c_i\} \cup B_1) + d^1(\{c_{q+ 1 - i}\} \cup B_1) = d^2(\{c_i \} \cup B_1) + d^2(\{c_{q + 1 - i} \cup B_1)$ for all $B_1 \subseteq B$. Moreover
$d^1(A_1) = d^2(A_1) \text{ for } A_1 \subseteq A,$ and $d_1(B_1) = d_2(B_1) \text{ for } B_1 \subseteq B${ and} $d^2(A_1 \cup B_1) = d^1(A_1 \cup B_1) + (q - 1) \text{ for } A_1 \subseteq A  \text{ and } B_1 \subseteq B.$
Altogether we have 
\begin{align*}
SW_k(G_2) &= \sum_{\substack{S\subseteq V(G_2)\\ | S| =k}} d^2(S) =  \sum_{\substack{S\subseteq V(G_1)\\ | S| =k}} d^1(S) + (q-1)\cdot\hspace{-5mm}
\sum_{\substack{l_1+l_2=k\\
0< l_1,l_2<k}} \binom{|A|}{l_1}\binom{|B|}{l_2}  \\
&= SW_k(G_1) + (q-1)\cdot\hspace{-5mm}\sum_{\substack{l_1+l_2=k\\
0< l_1,l_2<k}} \binom{|A|}{l_1}\binom{|B|}{l_2}.
\end{align*}
If $|A \cup B| \geq k$ then the generalized block shift decreases Steiner $k$-Wiener index, otherwise the value remains the same.
\end{proof}

Hence going up the GBS poset never inscreases the Steiner $k$-Wiener index. Moreover from the Theorem \ref{thm:decrease} and the fact that the path-like block graphs are minimal elements, and the star-like block graph is the only maximal element of the poset induced by the generalized block shift on $\mathcal{G}(b_1, b_2, \ldots , b_t)$, we get the following corollary.

\begin{corollary}
\label{cor:minmax}
Let $2\leq k \leq n$.
Among all block graphs from $\mathcal{G}(b_1, b_2, \ldots , b_t)$ the minimum value of the Steiner $k$-Wiener index is attained by the star-like block graph, and the maximum value of the Steiner $k$-Wiener index is attained among path-like block graphs.
\end{corollary}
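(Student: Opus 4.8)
The plan is to read Corollary \ref{cor:minmax} as an immediate consequence of two facts already established: the monotonicity of $SW_k$ along the generalized block shift (Theorem \ref{thm:decrease}) and the description of the extreme elements of the GBS poset (Corollary \ref{corGBS}). Concretely, Theorem \ref{thm:decrease} says that whenever $G_1 > G_2$, i.e.\ $G_1$ is obtained from $G_2$ by a proper generalized block shift, one has $SW_k(G_1) \le SW_k(G_2)$; that is, moving up the poset never increases the index. Corollary \ref{corGBS} tells us that the star-like block graph is the unique maximal element and that the path-like block graphs are exactly the minimal elements. So the entire content is to convert ``monotone along covering relations'' into ``extremes at the top and at the bottom of the poset,'' which is a routine finite-poset argument.

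For the \emph{minimum}, I would take an arbitrary $G \in \mathcal{G}(b_1,\ldots,b_t)$ and repeatedly apply a proper generalized block shift. Each such shift weakly decreases $SW_k$ and, by the remark preceding Theorem \ref{thmGBS}, increases the number of pendant blocks by exactly one; since a block graph with $t$ blocks has at most $t$ pendant blocks, this process terminates after finitely many steps, and it can only stop at a block graph admitting no proper shift, namely the unique maximal element, the star-like block graph $G_\star$. Composing the inequalities along this chain yields $SW_k(G_\star) \le SW_k(G)$, so $G_\star$ attains the minimum.

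For the \emph{maximum}, I would argue dually: since $\mathcal{G}(b_1,\ldots,b_t)$ is finite, every $G$ lies above some minimal element $G'$ of the poset; by Corollary \ref{corGBS} this $G'$ is path-like, and $G$ is reached from $G'$ by a finite sequence of proper generalized block shifts. Monotonicity along that chain gives $SW_k(G) \le SW_k(G')$, so the maximum over $\mathcal{G}(b_1,\ldots,b_t)$ is attained among path-like block graphs.

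The main obstacle is not the poset bookkeeping but the range of $k$: Theorem \ref{thm:decrease} is proved only for $2 \le k \le n-2$, so the two boundary values must be treated separately. For $k=n$ there is nothing to prove, since $SW_n(G)=n-1$ for every graph of order $n$. For $k=n-1$ I would invoke the formula $SW_{n-1}(G)=n^2-n-p$, where $p$ is the number of pendant vertices. Since $n=\sum_{i=1}^t b_i-(t-1)$ is the same for every graph in $\mathcal{G}(b_1,\ldots,b_t)$ and $p=n-|V_c(G)|$, minimizing $SW_{n-1}$ is the same as maximizing the number of cut vertices, and maximizing $SW_{n-1}$ the same as minimizing it. A connected block graph with $t\ge 3$ blocks has at least one cut vertex, with a single one precisely for the star-like block graph; on the other hand a count of block--cut-vertex incidences gives $|V_c(G)|\le t-1$, with equality exactly when every cut vertex lies in two blocks, as is the case for path-like block graphs. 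Hence the star-like block graph minimizes and the path-like block graphs are among the maximizers of $SW_{n-1}$ as well, completing the corollary.
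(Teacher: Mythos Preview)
Your argument follows the same route as the paper: the corollary is obtained directly from Theorem~\ref{thm:decrease} (monotonicity of $SW_k$ under proper generalized block shifts) together with Corollary~\ref{corGBS} (identification of the extreme elements of the GBS poset). The paper simply states this implication without spelling out the chain argument; your write-up makes the finite-poset reasoning explicit, which is fine.

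The one place where you add genuine content is the treatment of the boundary values $k=n-1$ and $k=n$. The paper's Theorem~\ref{thm:decrease} is proved only for $2\le k\le n-2$, yet Corollary~\ref{cor:minmax} is stated for $2\le k\le n$; the paper does not address this gap. Your handling of these cases via $SW_n(G)=n-1$ and $SW_{n-1}(G)=n^2-n-p$ is correct. Note only that your block--cut-vertex count shows $|V_c(G)|=t-1$ holds for \emph{all} claw-free block graphs, not just the path-like ones; since path-like graphs are claw-free, they are indeed among the maximizers of $SW_{n-1}$, which is all the corollary claims.
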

%

Hence Corollary \ref{cor:minmax} implies Theorem \ref{minmax}. Finding the upper bound of the Steiner $k$-Wiener index over $\mathcal{G}(b_1, b_2, \ldots , b_t)$ turns out to be more challenging problem. Let $\mathcal{G_{CF}}(b_1, b_2, \ldots , b_t)$ denote the set of all connected claw-free block graphs with block order sequence $(b_1, b_2, \ldots , b_t)$, where $b_1 \geq b_2 \geq  \ldots \geq b_t$. Recall that claw-free block graphs are precisely the line graphs of trees. In~\cite{Buckley81} Buckley has shown the following result.

\begin{theorem}
\label{thm.buckley}
Let $T$ be a tree on $n$ vertices. Then $W(L(T))=W(T) - {n \choose 2}$. 
\end{theorem}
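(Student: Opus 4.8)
The plan is to mimic the classical edge-contribution argument for the Wiener index of a tree recorded in \eqref{eq:Wiener}, but applied to pairs of \emph{edges} rather than pairs of vertices. The crucial first step is to pin down the distance in $L(T)$ between two of its vertices, i.e.\ between two edges $e$ and $f$ of $T$. Since $T$ is a tree, there is a unique path $P$ in $T$ realizing the connection between $e$ and $f$, and its edge set consists of $e$, $f$, and the edges lying strictly between them; taken in the order they occur along $P$, these edges form a shortest path in $L(T)$. So I would first establish the lemma
\[
d_{L(T)}(e,f) = 1 + s(e,f),
\]
where $s(e,f)$ denotes the number of edges of $T$ lying strictly between $e$ and $f$. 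Summing over all $\binom{n-1}{2}$ pairs of edges then gives $W(L(T)) = \binom{n-1}{2} + \sum_{\{e,f\}} s(e,f)$.

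The second step is to evaluate $\sum_{\{e,f\}} s(e,f)$ by exchanging the order of summation and counting, for each edge $g$ of $T$, the number of pairs $\{e,f\}$ for which $g$ lies strictly between them. Here I would use the observation that $g$ lies strictly between $e$ and $f$ precisely when $e$ and $f$ fall into different components of $T-g$: removing $g$ splits $T$ into two subtrees on $a_g$ and $n-a_g$ vertices, hence on $a_g-1$ and $n-a_g-1$ edges respectively, and every edge other than $g$ lies wholly in exactly one of the two components. Therefore $g$ is counted $(a_g-1)(n-a_g-1)$ times, and $\sum_{\{e,f\}} s(e,f) = \sum_{g \in E(T)} (a_g-1)(n-a_g-1)$.

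The final step is purely algebraic. Expanding $(a_g-1)(n-a_g-1) = a_g(n-a_g) - (n-1)$ and summing over the $n-1$ edges, the products $a_g(n-a_g)$ reassemble into $W(T)$ by the edge-contribution formula \eqref{eq:Wiener} (with $n(T_1)=a_g$ and $n(T_2)=n-a_g$), while the constant terms contribute $-(n-1)^2$; thus $\sum_{\{e,f\}} s(e,f) = W(T) - (n-1)^2$. Combining this with the first step and using $\binom{n-1}{2} - (n-1)^2 = -\binom{n}{2}$ yields $W(L(T)) = W(T) - \binom{n}{2}$.

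I expect the main obstacle to be the careful justification of the distance lemma in the first step, namely verifying that the edges strictly between $e$ and $f$ genuinely form a geodesic in $L(T)$ and that no shorter connection exists. This is exactly where the tree structure is used: the connecting path in $T$ is unique, so any walk in $L(T)$ from $e$ to $f$ must pass through a vertex corresponding to each intermediate edge. Once the lemma and the middle-edge reorganization are in place, the reduction to \eqref{eq:Wiener} is routine.
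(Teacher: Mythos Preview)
Your argument is correct. The distance lemma $d_{L(T)}(e,f)=1+s(e,f)$ holds for the reason you state: in a tree the unique minimal path containing both $e$ and $f$ has $s(e,f)+2$ edges, and consecutive edges along it share a vertex, giving a path of length $s(e,f)+1$ in $L(T)$; no shorter route exists because any $e$--$f$ walk in $L(T)$ projects to a connected edge-set in $T$ containing $e$ and $f$, hence containing all $s(e,f)$ intermediate edges. The double-counting step and the algebra $\binom{n-1}{2}-(n-1)^2=-\binom{n}{2}$ are fine.

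As for comparison: the paper does not prove Theorem~\ref{thm.buckley} at all; it simply quotes Buckley's result from \cite{Buckley81} and uses it as a black box. So there is no ``paper's own proof'' to match against. Your edge-pair / intermediate-edge decomposition, feeding back into the edge-contribution formula~\eqref{eq:Wiener}, is a clean self-contained proof and is in the same spirit as Buckley's original argument. One minor alternative worth knowing: one can also argue vertex-pairwise, observing that for $e=uv$ and $f=xy$ one has $d_{L(T)}(e,f)=\tfrac{1}{4}\bigl(d(u,x)+d(u,y)+d(v,x)+d(v,y)\bigr)-\tfrac{1}{2}$ in a tree, and then sum; but your route via~\eqref{eq:Wiener} is arguably cleaner since it avoids the four-term average.
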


Note that for any $G \in \mathcal{G_{CF}}(b_1, b_2, \ldots , b_t)$, any cut vertex of $G$ is incident to exactly two blocks, therefore $G$ has maximal possible number of cut vertices among all block graphs from $\mathcal{G}(b_1, b_2, \ldots , b_t)$. Hence all graphs from $\mathcal{G_{CF}}(b_1, b_2, \ldots , b_t)$ have the same number of cut vertices, which is equal to $t-1$, and therefore also the same number of pendant vertices, which equals $n-t+1$.

By Theorem \ref{thm.buckley} finding minimum or maximum value of the Wiener index over the set $\mathcal{G_{CF}}(b_1, b_2, \ldots , b_t)$ is equivalent to  finding a minimum or maximum value of the Wiener index on the set of trees with the degree sequence $(b_1, \ldots, b_t, 1, 1, \ldots, 1)$, where 1's corresponds to pendant vertices in block graphs from $\mathcal{G_{CF}}(b_1, b_2, \ldots , b_t)$, hence 1 appears $n-t+1$ times in the sequence.

Let $(d_1, \ldots, d_n)$ be a degree sequence of the tree $T$ with
$d_1 \geq d_2 \geq  \ldots \geq  d_k \geq  2 > 1 = d_{k+1} = \ldots = d_n$.
Then $T$ is called greedy tree if it can be embedded in the plane as follows:

\begin{enumerate}
\item take the vertex $v$ with degree $d_1$ as the root,
\item each vertex $u$ lies on some line $i$ where $i$ is the distance between the root $v$ and $u$,
\item each line is filled up with vertices in decreasing degree order from left to right.
\end{enumerate}
Next we provide an example of a greedy tree and its line graph.
\begin{ill}
Let $(4, 4, 4, 3, 3, 3, 2, 2, 2, 1, 1, 1, 1, 1, 1, 1, 1, 1, 1, 1)$ be the given degree sequence. Then its corresponding greedy tree using the plane embedding from the definition is
shown in Figure \ref{fig_greedy}.
\end{ill}

\begin{center}
\begin{figure}[h!]
\centering
\includegraphics[width= 0.9\textwidth]{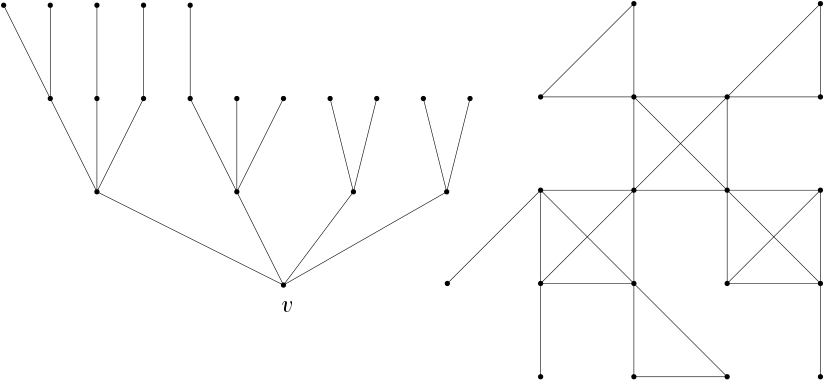}
  \caption{A greedy tree $T$ and its line graph $L(T)$.}
  \label{fig_greedy}
\end{figure}
\end{center}

Wang \cite{wang2008extremal} and Zhang et al. \cite{zhang2008wiener} have shown
independently the following theorem.

\begin{theorem}
\label{thm.greedytree}
Let $(d_1, \ldots, d_n)$ be an integer sequence with
$d_1 \geq d_2 \geq \ldots \geq d_s \geq 2 > 1 = d_{s+1} = \ldots= d_n$
and $\sum_{i=1}^n d_i = 2(n-1)$.
Then the greedy tree with degree sequence $(d_1, \ldots, d_n)$ minimizes the Wiener index among
all trees with same degree sequence.
\end{theorem}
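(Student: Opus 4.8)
The plan is to prove extremality of the greedy tree by a degree-sequence-preserving exchange argument built on the edge-contribution form of the Wiener index. First I would fix a tree $T$ with the prescribed degree sequence and root it at a centroid $r$; writing $s(v)$ for the number of vertices in the subtree hanging below a non-root vertex $v$, formula~\eqref{eq:Wiener} gives
\begin{equation*}
W(T)=\sum_{v\neq r} s(v)\bigl(n-s(v)\bigr),
\end{equation*}
since deleting the edge joining $v$ to its parent splits $T$ into parts of sizes $s(v)$ and $n-s(v)$. The point of rooting at a centroid is that then $s(v)\le n/2$ for every $v\neq r$, so each contribution is governed by $f(a)=a(n-a)$ on the range where $f$ is non-decreasing; this monotonicity is what will let me sign the effect of an exchange.

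The basic move is a \emph{branch swap}: given two vertices $u,v$ and a branch $B_1$ (of size $s_1$) hanging at $u$ and a branch $B_2$ (of size $s_2$) hanging at $v$, detach $B_1,B_2$ and reattach $B_1$ at $v$ and $B_2$ at $u$. This preserves the degree of every vertex, hence stays inside the class of trees with the given degree sequence, and it leaves the internal structure of $B_1$ and $B_2$ untouched. In the case where $u$ is an ancestor of $v$ and $s_2>s_1$ (so the heavier branch is pulled toward the root), the only subtree sizes that change are those of the vertices on the path strictly between $u$ and $v$ together with $v$ itself, each of which drops by $s_2-s_1$; since all these vertices are proper descendants of the centroid their subtree sizes are at most $n/2$, so by monotonicity of $f$ every affected term strictly decreases and $W$ strictly decreases. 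I would use exactly these lifting swaps to push heavier branches, and thereby the higher-degree vertices, toward the root.

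With the move in hand, the argument is: if $T$ is not the greedy tree then its breadth-first reading from the root violates the decreasing-degree filling rule at some vertex, and I would convert such a violation into a lifting swap of the above type, strictly decreasing $W$. Because $W$ is a positive integer, iterating terminates, and the only tree admitting no improving swap is, up to isomorphism, the greedy tree; hence the greedy tree is the unique minimizer.

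The hard part will be the sign analysis for swaps between \emph{incomparable} vertices, i.e.\ when neither of $u,v$ is an ancestor of the other. There the subtree sizes along the path from their common ancestor down to the shallower vertex increase while those down to the deeper vertex decrease, so a single swap produces both positive and negative terms and monotonicity of $f$ alone does not settle the sign. I expect to handle this by restricting to a disciplined family of swaps — sibling swaps at a fixed level combined with leaf/branch lifting between consecutive levels — and organizing the whole reduction as a level-by-level induction that greedily fixes one breadth-first level at a time. A secondary technical point is reconciling the centroid root used to guarantee $s(v)\le n/2$ in the estimate with the maximum-degree root used in the definition of the greedy tree, and verifying that the swap-stable tree produced by the reduction is genuinely the greedy tree of the statement.
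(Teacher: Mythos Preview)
The paper does not prove this theorem at all: it is quoted as a known result, with the proof delegated to Wang~\cite{wang2008extremal} and Zhang et~al.~\cite{zhang2008wiener}. So there is no in-paper argument to compare your proposal against; the relevant comparison is with those two references.

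Your outline is in the right spirit---both cited proofs are exchange arguments that stay inside the degree-sequence class and push high-degree vertices toward a distinguished root---but the gaps you flag at the end are exactly the substance of those papers, and your sketch does not yet close them. Two concrete issues. First, rooting at a centroid to force $s(v)\le n/2$ is convenient for the monotonicity of $a\mapsto a(n-a)$, but the greedy tree is defined relative to a maximum-degree root, and for a general degree sequence the centroid need not be the vertex of maximum degree; you have not shown that a swap-stable tree under your centroid-rooted moves is isomorphic to the greedy tree, and in fact the reconciliation is delicate (Wang works directly with the max-degree root and controls the subtree sizes by a separate structural lemma rather than by the centroid trick). Second, for the incomparable case you promise ``sibling swaps at a fixed level combined with leaf/branch lifting between consecutive levels'' organized as a level-by-level induction, but this is precisely the hard part: a single sibling swap can increase some $s(v)$ values above $n/2$ along one branch while decreasing others, so monotonicity alone does not give the sign, and the actual proofs in \cite{wang2008extremal,zhang2008wiener} require a careful pairing/majorization argument comparing the multisets of subtree sizes before and after. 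Until you supply that comparison (or an equivalent lemma), the termination step ``the only tree admitting no improving swap is the greedy tree'' is asserted rather than proved.
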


From Theorem \ref{thm.buckley} and Theorem \ref{thm.greedytree} we immediately get the following corollary.

\begin{corollary}\label{cor.greedy}
Then line graph of the greedy tree with the degree sequence\\ $(b_t, b_{t-1}, \ldots, b_2, b_1, 1, 1, \ldots, 1)$ minimizes the Wiener index over the set $\mathcal{G_{CF}}(b_1, b_2, \ldots , b_t)$.
\end{corollary}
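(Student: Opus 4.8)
The plan is to reduce the statement to the two results just quoted, Theorem~\ref{thm.buckley} and Theorem~\ref{thm.greedytree}, by passing through the line-graph correspondence. First I would fix the tree whose line graph we study. Since claw-free block graphs are exactly the line graphs of trees, every $G \in \mathcal{G_{CF}}(b_1, \ldots, b_t)$ equals $L(T)$ for some tree $T$. In a tree (being triangle-free), any family of pairwise intersecting edges shares a common vertex, so the maximal cliques, i.e.\ the blocks, of $L(T)$ correspond bijectively to the vertices of $T$ of degree at least $2$, the block arising from a vertex $v$ being a copy of $K_{\deg(v)}$, while the leaves of $T$ account for the pendant vertices of $L(T)$. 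Hence $L(T) \in \mathcal{G_{CF}}(b_1, \ldots, b_t)$ precisely when $T$ has degree sequence $(b_1, \ldots, b_t, 1, \ldots, 1)$ with the entry $1$ repeated $n-t+1$ times, as already observed in the text, and the map $T \mapsto L(T)$ carries $\mathcal{T}(b_1, \ldots, b_t, 1, \ldots, 1)$ onto $\mathcal{G_{CF}}(b_1, \ldots, b_t)$. In particular every such $T$ has the same number of vertices $N = n+1$, since $L(T)$ has one vertex per edge of $T$, so that $N - 1 = n$; the degree-sum identity $\sum_{i=1}^t b_i + (n+1-t) = 2n$ confirms the count of leaves.

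Second, I would invoke Theorem~\ref{thm.buckley}. Because $N = n+1$ is constant over the whole family, the additive correction does not depend on which tree $T$ we choose, so $W(L(T)) = W(T) - \binom{n+1}{2}$ with a fixed constant. Combining this with the surjectivity of $T \mapsto L(T)$ yields
\[
\min_{G \in \mathcal{G_{CF}}(b_1, \ldots, b_t)} W(G) = \min_{T \in \mathcal{T}(b_1, \ldots, b_t, 1, \ldots, 1)} W(L(T)) = \left( \min_{T} W(T) \right) - \binom{n+1}{2},
\]
and a tree realizes the left-hand minimum through its line graph exactly when it minimizes $W(T)$ over the fixed-degree-sequence family.

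Third, I would apply Theorem~\ref{thm.greedytree}: the greedy tree with the prescribed degree sequence minimizes $W(T)$ among all trees with that degree sequence. Together with the displayed chain of equalities, its line graph minimizes the Wiener index over $\mathcal{G_{CF}}(b_1, \ldots, b_t)$, which is the assertion; the ordering $(b_t, b_{t-1}, \ldots, b_1, 1, \ldots, 1)$ in the statement denotes the same multiset of degrees as $(b_1, \ldots, b_t, 1, \ldots, 1)$, so it defines the same greedy tree. The only genuine points to establish, and hence the main obstacle, are the two structural facts underpinning the reduction: that the block orders of $L(T)$ are exactly the degrees $\geq 2$ of $T$, and that the vertex count $N$ is constant across the family so that the Buckley constant factors out of the minimization. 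Both follow from the definition of the line graph together with the degree-sum identity, but they are precisely what legitimizes the reduction; once they are in place the corollary is immediate from Theorems~\ref{thm.buckley} and~\ref{thm.greedytree}.
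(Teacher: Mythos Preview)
Your argument is correct and is exactly the approach the paper takes: the corollary is stated as an immediate consequence of Theorem~\ref{thm.buckley} and Theorem~\ref{thm.greedytree}, via the line-graph correspondence between $\mathcal{G_{CF}}(b_1,\ldots,b_t)$ and $\mathcal{T}(b_1,\ldots,b_t,1,\ldots,1)$ already discussed in the text. You have simply spelled out the details that the paper leaves implicit, including the constancy of the Buckley correction term across the family.
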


The maximization problem is somewhat more complicated. It has been proven by Shi in \cite{shi1993average} that the problem can be reduced to the study of caterpillars. Schmuck et al. \cite{schmuck2012greedy} have shown that an
optimal tree is a caterpillar with vertex degrees non-increasing from the ends of
the caterpillar towards its central part. 
In \cite{ccela2011wiener} Schmuck et al. obtained a polynomial time algorithm for finding a caterpillar $T$ that maximizes the Wiener index among all trees with a prescribed degree sequence $(b_t, b_{t-1}, \ldots, b_2, b_1, 1, 1, \ldots, 1)$. We call $T$ the optimal caterpillar for the degree sequence $(b_t, b_{t-1}, \ldots, b_2, b_1, 1, 1, \ldots, 1)$.

\begin{corollary}
\label{cor.caterpillar}
The line graph of the optimal caterpillar for the degree sequence\\ $(b_t, b_{t-1}, \ldots, b_2, b_1, 1, 1, \ldots, 1)$ maximizes the Wiener index over the set $\mathcal{G_{CF}}(b_1, b_2, \ldots , b_t)$.
\end{corollary}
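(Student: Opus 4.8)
The plan is to reduce the maximization of the Wiener index over the class of line graphs of trees to the already-solved maximization of the Wiener index over the associated class of trees, in exactly the same manner as the minimization problem was handled in Corollary~\ref{cor.greedy}. The crucial structural fact, recorded in the discussion preceding Corollary~\ref{cor.greedy}, is that the line graph operation $T \mapsto L(T)$ sends the set of trees with degree sequence $(b_t, b_{t-1}, \ldots, b_1, 1, \ldots, 1)$ onto the class $\mathcal{G_{CF}}(b_1, b_2, \ldots, b_t)$: each internal vertex of $T$ of degree $b_i$ produces a block of order $b_i$ in $L(T)$, while the leaves of $T$ produce the pendant vertices, whose number is fixed at $n - t + 1$. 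In particular the number of vertices $n_T$ of every tree in this family is one and the same.

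First I would invoke Buckley's formula (Theorem~\ref{thm.buckley}), which gives $W(L(T)) = W(T) - \binom{n_T}{2}$ for every tree $T$ in the family. The essential observation is that the subtracted term $\binom{n_T}{2}$ depends only on $n_T$, and hence is the same constant for every tree realizing the prescribed degree sequence. Thus the function $T \mapsto W(L(T))$ differs from $T \mapsto W(T)$ by a fixed additive constant across the whole family, so the two functions attain their maximum at exactly the same tree(s).

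Next I would appeal to the cited extremal results: by Shi~\cite{shi1993average} and Schmuck et al.~\cite{schmuck2012greedy} a maximizer of $W(T)$ among all trees with a prescribed degree sequence is a caterpillar, and by the algorithm of~\cite{ccela2011wiener} the maximizer is precisely the optimal caterpillar for the degree sequence $(b_t, b_{t-1}, \ldots, b_1, 1, \ldots, 1)$. Combining this with the previous paragraph, $L(T)$ attains the maximum of the Wiener index over $\mathcal{G_{CF}}(b_1, b_2, \ldots, b_t)$ exactly when $T$ is this optimal caterpillar, which is the assertion of the corollary.

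Since the whole argument is merely a transfer of a known tree-extremal result through Buckley's constant-shift identity, there is no genuine obstacle beyond bookkeeping. The one point that must be checked is that $\binom{n_T}{2}$ really is constant across the family, i.e. that all trees realizing the degree sequence have the same order; this holds because the order of a tree is determined by its degree sequence, so the additive shift cannot disturb which element is extremal. The proof therefore mirrors that of Corollary~\ref{cor.greedy} verbatim, with the caterpillar maximization result substituted for the greedy-tree minimization result.
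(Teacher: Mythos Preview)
Your proposal is correct and follows exactly the approach the paper intends: the paper states explicitly before Corollary~\ref{cor.greedy} that, via Buckley's identity (Theorem~\ref{thm.buckley}), extremizing the Wiener index over $\mathcal{G_{CF}}(b_1,\ldots,b_t)$ is equivalent to extremizing it over trees with degree sequence $(b_1,\ldots,b_t,1,\ldots,1)$, and then simply cites \cite{shi1993average,schmuck2012greedy,ccela2011wiener} for the optimal caterpillar. Your write-up merely makes this transfer explicit, including the observation that $\binom{n_T}{2}$ is constant across the family, so there is nothing to add.
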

 
As the geodetic distance and the Steiner distance on trees share many common properties, 
we propose the following problem.

\begin{problem}\label{conjecture1}
Let $k$ be an integer with $3\leq k \leq n$.
Let $(d_1, \ldots, d_n)$ be an integer sequence with
$d_1 \geq d_2 \geq \ldots \geq d_s +\geq 2 > 1 = d_{s+1} = \ldots= d_n$
and $\sum_{i=1}^n d_i = 2(n-1)$.
Is it true that the greedy tree minimizes the Steiner $k$-Wiener index over the set of all trees with degree sequence $(d_1, \ldots, d_n)$, and that the optimal caterpillar maximizes the Steiner $k$-Wiener index over the set of all trees with degree sequence $(d_1, \ldots, d_n)$?
\end{problem}

Although we are not aware of a possible generalization of Buckley's theorem which would relate the Steiner $k$-Wiener index of a tree to the Steiner $k$-Wiener index of its line graph, due to the fact that Steiner distance behaves nicely on block graphs, as demonstrated throughout this paper, we propose also the following problem.

\begin{problem}\label{conjecture2}
Let $k$ be an integer with $3\leq k \leq n$.
Let $(d_1, \ldots, d_n)$ be an integer sequence with
$d_1 \geq d_2 \geq \ldots \geq d_s \geq 2 > 1 = d_{s+1} = \ldots= d_n$
and $\sum_{i=1}^n d_i = 2(n-1)$. Is it true that the line graph of the greedy tree with the degree sequence $(d_1, \ldots, d_n)$ minimizes the Steiner $k$-Wiener index over the set ${\mathcal G}(d_1, \ldots, d_n)$, and that the line graph of the optimal caterpillar for the degree sequence $(d_1, \ldots, d_n)$ maximizes the Steiner $k$-Wiener index over the set ${\mathcal G}(d_1, \ldots, d_n)$?
\end{problem}

Positive solution of Problem \ref{conjecture1} seems to be the crucial step towards the positive solution of Problem \ref{conjecture2}.

 %

\frenchspacing                                

\vspace{-2mm}
\Addresses
                    
\end{document}